\date{\today}
\newcommand\bC{{\mathbb C}}
\newcommand\fa{{\mathfrak a}}
\newcommand\fg{{\mathfrak g}}
\newcommand\fh{{\mathfrak h}}
\newcommand\frk{{\mathfrak k}}
\newcommand\fp{{\mathfrak p}}
\newcommand\fq{{\mathfrak q}}
\newcommand\ft{{\mathfrak t}}
\newtheorem{theorem}{Theorem}[section]
\newtheorem{corollary}[theorem]{Corollary}
\newtheorem{conjecture}[theorem]{Conjecture}
\newtheorem{definition}[theorem]{Definition}
\newtheorem{example}[theorem]{Example}
\newtheorem{lemma}[theorem]{Lemma}
\newtheorem{proposition}[theorem]{Proposition}
\newtheorem{remark}[theorem]{Remark}
\newcommand\im{{\operatorname{im}}}
\begin{document}
\title[Scattered Representations]{Scattered representations of complex classical Lie groups}
\author{Chao-Ping Dong}
\author{Kayue Daniel Wong}

\address[Dong]{School of Mathematical Sciences, Soochow University, Suzhou 215006,
P.~R.~China}
\email{chaopindong@163.com}

\address[Wong]{School of Science and Engineering, The Chinese University of Hong Kong, Shenzhen,
Guangdong 518172, P. R. China}
\email{kayue.wong@gmail.com}

\begin{abstract}
As a continuation of \cite{DW}, this paper studies scattered representations
of $SO(2n+1, \bC)$, $Sp(2n, \bC)$ and $SO(2n, \bC)$. We describe the Zhelobenko parameters of these representations, count their cardinality, and determine their spin-lowest $K$-types. We also disprove a conjecture raised in 2015 asserting that the unitary dual can be obtained via parabolic induction from irreducible unitary representations with non-zero Dirac cohomology.
\end{abstract}

\subjclass[2010]{Primary 22E46.}

\keywords{complex classical Lie groups, Dirac cohomology, Littlewood-Richardson rule, scattered representation, unipotent representation.}

\maketitle
\setcounter{tocdepth}{1}

\section{Introduction}\label{sec:intro}

Although many results quoted in this paper hold in a much wider setting, for convenience, we set $G$ as a complex connected classical Lie group. Fix a Cartan involution $\theta$ of $G$ such that its fixed points form a maximal compact subgroup $K$ of $G$. Then on the Lie algebra level, we have the Cartan decomposition
$$
\fg_0=\frk_0+\fp_0.
$$
The subscripts will be dropped to stand for the complexified Lie algebras

In the late 1990s, Vogan introduced the notion of Dirac cohomology \cite{V}, see \eqref{Dirac-cohomology}. As a new invariant for admissible $(\mathfrak{g}, K)$-modules, Dirac cohomology has profound implications in various areas of representation theory and beyond.

One of the applications of Dirac cohomology is to gain a better understanding of unitary representations.
More explicitly, it is known by \cite{VZ} that all unitary $(\mathfrak{g},K)$-modules with non-zero $(\mathfrak{g},K)$ cohomology are $A_\fq(\lambda)$ modules. Moreover, in \cite{S}, it was shown that these modules characterize all unitary modules with real, integral, and strongly regular infinitesimal characters. On the other hand,
it is shown in \cite{HKP} that the set of unitary modules with non-zero Dirac cohomology
$\widehat{G}^d$ strictly contains all unitary modules with non-zero $(\mathfrak{g},K)$ cohomology.
Therefore, by understanding $\widehat{G}^d$, one can have a better understanding of the unitary dual $\widehat{G}$.

The paper \cite{DD} reduced the study of $\widehat{G}^d$ to a
finite set called scattered representations $\widehat{G}^{sc}$, see Definition \ref{def-sc} for the precise meaning. Let us recall some of the recent progress in the
study of $\widehat{G}^{sc}$ in the case when $G$ is a complex simple Lie group:
\begin{itemize}
\item When $G = SL(n,\mathbb{C})$, \cite{DW} gives a complete study of $\widehat{G}^{sc}$, thus of $\widehat{G}^{d}$ as well.
\item When $G$ is  exceptional of type $G_2$, $F_4$ and $E_6$, then
\cite{DD} and \cite{D2} described the Zhelobenko parameters of all scattered representations.
One can then use \texttt{atlas} \cite{At} to verify that these modules satisfy Conjecture 1.1 of \cite{BP1}.
\end{itemize}

In this manuscript, we study scattered representations of $G$ other than type A. Our main results include Corollary \ref{Cor-usmall} answering \cite[Conjecture C]{DD} for $G$ in the affirmative, Corollary \ref{Cor-number} on the number of scattered representations, and Theorem \ref{thm-slkt} giving the form of spin-lowest $K$-types introduced in \cite{D1} (see Section \ref{sec-slkt}). At the end of the introduction, we will prove Conjecture 5.6 (ii)-(iv) of \cite{DD}. In Section \ref{sec-Hconj}, we will disprove Conjecture 13.3 of \cite{H}, which asserts that any irreducible unitary representation can be parabolically induced from an irreducible unitary representation with non-zero Dirac cohomology. As we shall see, this conjecture captures some special feature of type $A$, which does not hold in general.

There is a couple of reasons why we are interested in studying $\widehat{G}^{sc}$ for these groups
even though $\widehat{G}^d$ is already known. Firstly, when $\pi \in \widehat{G}^d$, spin lowest $K$-types are precisely the $K$-types contributing to the Dirac cohomology of
$\pi$ (see Section \ref{sec-slkt} below). The main result of \cite{BDW} gives the Dirac cohomology of all $\pi \in \widehat{G}^d$
for all complex classical groups, yet their spin lowest $K$-types are unknown. Therefore, it
would still be advantageous to investigate the spin lowest $K$-types of $\widehat{G}^{sc}$
even though $\widehat{G}^d$ is known in such cases.

Secondly, the recent research announcement \cite{BP2} suggests that Dirac cohomology could be used to construct interesting automorphic forms. The techniques involved in the study of $\widehat{G}^{sc}$ in this manuscript should be applicable to
(some) real reductive groups.

\subsection{Preliminaries}
Let $H$ be a Cartan subgroup of $G$. Let $\fh_0$ be the Lie algebra of $H$. Fix a positive root system $\Delta_G^+$ of $\Delta(\fg_0, \fh_0)$, and let $\rho$ be half the sum of all positive roots in $\Delta_G^+$.

We denote by $\langle\cdot, \cdot\rangle$ the Killing form on $\fg_0$. This form is negative definite on $\frk_0$ and positive definite on $\fp_0$. Moreover, $\frk_0$ and $\fp_0$ are orthogonal to each other under $\langle\cdot, \cdot\rangle$. We shall denote by $\|\cdot\|$ the norm corresponding to the Killing form. Let $Z_i$, $1\leq i\leq m$, be an orthonormal basis of $\fp_0$ with respect to the norm $\|\cdot\|$. The \emph{Dirac operator} $$
D:=\sum_{i=1}^m Z_i\otimes Z_i
$$
introduced by Parthasarathy \cite{P1} lives in $U(\fg)\otimes C(\fp)$, the tensor product of the universal enveloping algebra of $\fg$ and the Clifford algebra of $\fp$. Denote by
${\rm Ad}: K\to {\rm SO}(\fp_0)$ the adjoint map, and $p: {\rm Spin} (\fp_0)\to {\rm SO}(\fp_0)$ the double covering map.
Let $\widetilde{K}$ be the spin double covering group of $K$. That is,
$$
\widetilde{K}:=\{(k, s) \in K \times {\rm Spin} (\fp_0) \mid {\rm Ad}(k)=p(s) \}.
$$
Let $S$ be a spin module of $C(\fp)$, and let $\pi$ be an admissible $(\fg, K)$-module. Then $D$ acts on $\pi\otimes S$, and the \emph{Dirac cohomology} of $\pi$ \cite{V} is defined as
\begin{equation}\label{Dirac-cohomology}
H_D(\pi):=\ker D/(\ker D \cap \im D).
\end{equation}
Since the Dirac operator $D$ is independent of the choice of the orthonormal basis $\{Z_i\}_{i=1}^m$, it is evident that as a $\widetilde{K}$-module, the Dirac cohomology $H_D(\pi)$ is an invariant of $\pi$.

Now let us come to the representations of $G$. Let $H=TA$ be the Cartan decomposition of $H$, with $\fh_0=\ft_0+\fa_0$. We make the following identifications:
\begin{equation}\label{identification}
\fh\cong \fh_0\times \fh_0, \quad \ft=\{(x, -x): x\in\fh_0\} \cong \fh_0, \quad \fa\cong\{(x, x): x\in \fh_0\}  \cong \fh_0.
\end{equation}
Take an arbitrary pair $(\lambda_L, \lambda_R)\in \fh_0^*\times \fh_0^*$ such that $\eta:=\lambda_L-\lambda_R$ is integral.
Define
\begin{equation}\label{def-eta}
\{\eta\} := w'\eta
\end{equation}
as the unique dominant weight to which $\eta$ is conjugate under the action of the Weyl group $w' \in W := W(\fg_0,\fh_0)$.
Write $\nu:=\lambda_L + \lambda_R$. We can view $\eta$ as a weight of $T$ and $\nu$ a character of $A$. Put
$$
I(\lambda_L, \lambda_R):={\rm Ind}_B^G(\bC_{\eta}\otimes \bC_{\nu}\otimes {\rm triv})_{K-{\rm finite}},
$$
where $B\supset H$ is the Borel subgroup of $G$ determined by $\Delta_G^+$.
Then the $K$-type with highest weight $\{\eta\}$, denoted by $V^K_{\{\eta\}}$, occurs exactly once in $I(\lambda_L, \lambda_R)$. Let $J(\lambda_L, \lambda_R)$ be the unique irreducible subquotient of $I(\lambda_L, \lambda_R)$ containing $V^K_{\{\eta\}}$. By \cite{Zh}, every irreducible admissible $(\fg, K)$-module has the form $J(\lambda_L, \lambda_R)$. Indeed, $J(\lambda_L, \lambda_R)$ has infinitesimal character equal to the $W(\fg,\fh) = W \times W$ orbit of $(\lambda_L, \lambda_R)$, and lowest $K$-type $V^K_{\{\lambda_L-\lambda_R\}}$ (note that under the identification $\ft \cong \fh_0$ above, the compact Weyl group $W(\mathfrak{k},\ft)$ is isomorphic to $W$). We will refer to the pair $(\lambda_L, \lambda_R)$ as the {\it Zhelobenko parameter} for the module $J(\lambda_L, \lambda_R)$.

 For $J(\lambda_L, \lambda_R)$ to live in $\widehat{G}^d$, it should admit a nondegenerate Hermitian form in the first place. Moreover, it should satisfy the  Vogan conjecture proved by Huang and Pand\v{z}i\'{c} \cite{HP1}. Then one deduces that, as carried out on page 5 of \cite{BP1}, $J(\lambda_L, \lambda_R)$ must have the form
 \begin{equation}\label{BP-reduction}
 J(\lambda, -s\lambda), \quad 2\lambda \mbox{ is dominant integral regular and}\ s\in W \mbox{ is an involution}.
 \end{equation}

\subsection{Spin-lowest $K$-type} \label{sec-slkt}
From now on, let $\pi = J(\lambda,-s\lambda)$ be an irreducible unitary $(\fg, K)$-module as in \eqref{BP-reduction}.
To achieve the classification of $\widehat{G}^d$, the first-named author
introduced the notion of spin-lowest $K$-type \cite{D1}: Given an arbitrary $K$-type $V^K_{\delta}$,
its \textit{spin norm} is defined as
\begin{equation}\label{spin-norm}
\|\delta\|_{\rm spin}:=\|\{\delta-\rho\}+\rho\|
\end{equation}
(here we use notation \eqref{def-eta} for $\{\delta-\rho\}$). Then a $K$-type $V^K_{\tau}$ occurring in $\pi$ is called a \textit{spin-lowest $K$-type} of $\pi$ if it attains the minimum spin norm among all the $K$-types showing up in $\pi$.

Using the definition of spin norm, \textit{Parthasarathy's Dirac operator inequality} \cite{P2} can be rephrased as:
\begin{equation}\label{Dirac-inequality}
\|\delta\|_{\rm spin}\geq \|2\lambda\| \quad \text{for all}\ V^K_{\delta}\ \text{appearing in}\ \pi.
\end{equation}
Moreover, one can deduce from \cite[Theorem 3.5.3]{HP2} that $\pi \in \widehat{G}^d$ if and only if
\begin{equation} \label{eq-par}
\{\mu-\rho\}+\rho = 2\lambda
\end{equation}
for some $K$-type $V^K_{\mu}$ appearing in $\pi$. In other words,
$\pi \in \widehat{G}^d$ if and only if its spin lowest $K$-types
make \eqref{Dirac-inequality} an equality. Moreover, in such cases, the Dirac cohomology of $\pi$ consists only of certain copies of
the $\widetilde{K}$-module $V^{\widetilde{K}}_{2\lambda - \rho}$.
Put in a different way, if $\pi \in \widehat{G}^d$,
then the spin-lowest $K$-types of $\pi$ are exactly the $K$-types contributing to its Dirac cohomology
(see Proposition 3.3 of \cite{D1} for more details).

\subsection{The classification of $\widehat{G}^d$}

Let us recall the classification of $\widehat{G}^d$ in \cite{BDW}.

\begin{theorem} \label{thm-unitarydual}
Let $G$ be a connected complex classical  Lie group, and $\pi = J(\lambda,-s\lambda)$
be an irreducible $(\mathfrak{g}, K)$-module as in \eqref{BP-reduction}. Then $\pi \in \widehat{G}^d$ if and only if  it has the form
$$\pi = {\rm Ind}_L^G\left(\bigotimes_{i=1}^l \mathbb{C}_{\chi_i} \otimes \pi_{\mathcal{O}}\right),$$
where
\begin{itemize}
\item $L$ is the Levi subgroup with $L = \prod_{i=1}^l GL(a_i) \times G'$, with $G'$ being the same type as $G$ of
smaller rank (we also allow $G' = G$, or $G'$ to be absent in $L$);
\item $\mathbb{C}_{\chi_i}$ is a unitary character of $GL(a_i)$; and
\item $\pi_{\mathcal{O}}$ is equal to one of the following unipotent representations of $G'$:
\begin{align*}
&\text{Type}\ B:\ \pi_{[2k+1,2n]} := J\begin{pmatrix} \frac{1}{2}, \frac{3}{2}, \dots, \frac{2k-1}{2}; 1, 2, \dots, n \\
\frac{1}{2}, \frac{3}{2}, \dots, \frac{2k-1}{2}; 1, 2, \dots, n \end{pmatrix}, \quad k \geq n \geq 0 \\
&\text{Type}\ C: \begin{cases} \pi_{[2n]} := J\begin{pmatrix}  1, 2, \dots, n \\
1, 2, \dots, n \end{pmatrix},\quad  n > 0\\
\pi_{[2n-1,1]} := J\begin{pmatrix}
&\frac{1}{2}, &\frac{3}{2}, &\dots, &\frac{2n-1}{2} \\
&\frac{(-1)^n}{2}, &\frac{3}{2}, &\dots, &\frac{2n-1}{2} \end{pmatrix} ,\quad  n > 0
\end{cases} \\
&\text{Type}\ D: \begin{cases}
\pi_{[2n]} := J\begin{pmatrix}0,\dots
  ,n-1 \\ 0,\dots ,n-1 \end{pmatrix},\quad  n > 0
 \\
\pi_{[2n,2k-1,1]} := J\begin{pmatrix}0,\dots
  ,n-1, &\frac{1}{2}, &\dots, & k - \frac{1}{2} \\ 0,\dots
  ,n-1, & \frac{(-1)^k}{2}, &\dots , &k- \frac{1}{2}
  \end{pmatrix},\quad  n \geq k > 1
	\end{cases}
\end{align*}
where $J\begin{pmatrix} \lambda_L \\ \lambda_R \end{pmatrix} := J(\lambda_L,\lambda_R)$ in the above expressions, 
and the subscripts of above representations give the column sizes of the Young diagram of the nilpotent orbit $\mathcal{O} \subset \mathfrak{g}'$.
\end{itemize}
\end{theorem}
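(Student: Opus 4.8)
The plan is to establish both directions of the equivalence through the spin-norm criterion \eqref{eq-par}: $\pi=J(\lambda,-s\lambda)$, in the reduced form \eqref{BP-reduction}, lies in $\widehat{G}^d$ exactly when some $K$-type $V^K_\mu$ of $\pi$ satisfies $\{\mu-\rho\}+\rho=2\lambda$, i.e. when Parthasarathy's inequality \eqref{Dirac-inequality} becomes an equality. The two directions are quite different in character: sufficiency is a (finite) explicit check plus a stability statement for induction, while necessity needs the full strength of Barbasch's classification of the unitary dual of a complex classical group.

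For \emph{sufficiency} I would argue in two steps. First, for each of the finitely many unipotent representations $\pi_\CO$ in the list I would verify \eqref{eq-par} directly. These are spherical special unipotent representations with small, explicitly given infinitesimal character, so their $K$-spectra are accessible --- via Barbasch's lowest-$K$-type data, via their realizations in the theta correspondence, or, in low rank, via \texttt{atlas} followed by an induction on the rank of $G'$ --- and one produces the witnessing $K$-type. Second, I would invoke (and, where needed, adapt) the results of \cite{DD} showing that membership in $\widehat{G}^d$ is preserved under unitary parabolic induction $\Ind_L^G(\bigotimes_i\bC_{\chi_i}\otimes\pi_\CO)$ as long as the resulting infinitesimal character remains of the form \eqref{BP-reduction}; this last requirement is precisely what forces each $\bC_{\chi_i}$ to be an ``integral'' unitary character of $GL(a_i)$, which already lies in $\widehat{GL(a_i)}^d$, so that a bottom-layer / lowest-$K$-type argument of the kind used in \cite{DW} and \cite{DD} propagates the Dirac-cohomological $K$-type of the inducing module to one of $\pi$ satisfying \eqref{eq-par}.

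For \emph{necessity} I would start from Barbasch's classification, according to which every $\pi\in\widehat G$ arises as a unitary induction $\Ind_L^G(\bigotimes_i\bC_{\chi_i}\otimes\pi_0)$ with $\pi_0$ a genuinely unitary building block on a classical factor $G'$, obtained as a complementary-series deformation of a special unipotent representation. Imposing \eqref{BP-reduction} forces the infinitesimal character $\lambda$ to be half of a dominant integral regular weight, which via Barbasch--Vogan already narrows the nilpotent orbit underlying $\pi_0$ to a short list. The remaining step is to rule out nonzero continuous deformation: using \eqref{Dirac-inequality} one shows that pushing the continuous parameter into the open complementary range strictly increases $\min_\delta\|\delta\|_{\rm spin}$ above $\|2\lambda\|$, so that no $K$-type can witness \eqref{eq-par}; this pins $\pi_0$ to one of the $\pi_\CO$. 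I would organize this step separately for types $B$, $C$, $D$, since the combinatorics of orbits and Young diagrams --- and the parity phenomena visible in the $(-1)^n$ and $(-1)^k$ entries --- differ.

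The main obstacle is the necessity direction, and specifically not the appeal to Barbasch's classification (which is available) but the uniform spin-norm estimate: one must show that $\min_\delta\|\delta\|_{\rm spin}$ is strictly monotone in the continuous parameter away from the endpoints and that its endpoint value equals $\|2\lambda\|$ precisely for the listed orbits. This demands explicit control of the $K$-spectra of Barbasch's modules --- the computation-heavy part of the argument --- together with, in type $D$, careful tracking of outer automorphisms and of the even/odd behavior of $k$ and $n$.
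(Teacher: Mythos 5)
This theorem is stated in the paper as a recollection of the main classification result of \cite{BDW}; the preceding sentence reads ``Let us recall the classification of $\widehat{G}^d$ in \cite{BDW},'' and no proof is given here. So there is no in-paper argument against which your sketch can be measured: the comparison you were asked to survive is, in this instance, a comparison against a citation.

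That said, your outline is a plausible reconstruction of the sort of argument one would find in \cite{BDW}: verify \eqref{eq-par} directly on each unipotent $\pi_\CO$, propagate membership in $\widehat{G}^d$ through unitary parabolic induction, and for necessity start from Barbasch's classification of $\widehat{G}$ in \cite{B} and use the Dirac inequality \eqref{Dirac-inequality} together with the constraint \eqref{BP-reduction} to eliminate genuine complementary-series deformations and all but the listed orbits. Two cautions if you intend to execute this. First, stability of $\widehat{G}^d$ under full parabolic induction is not an off-the-shelf lemma of \cite{DD}: what \cite{DD} provides is a reduction of $\widehat{G}^d$ to scattered representations of Levi factors of $\theta$-stable parabolics, and in the complex setting one must still identify real and cohomological induction and check that the infinitesimal character of the induced module remains of the form \eqref{BP-reduction}. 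That is exactly what forces the characters $\chi_i$ to be the specific integral unitary characters that appear, so this step is a genuine computation rather than an appeal to a general principle. Second, the strict monotonicity of $\min_\delta\|\delta\|_{\mathrm{spin}}$ in the continuous parameter is the hard, computation-heavy core of the necessity direction; it does not follow softly from \eqref{Dirac-inequality} but requires control of the $K$-spectra of Barbasch's building blocks, and in type $D$ the interaction with the outer automorphism and the parity of $k$ and $n$ (the source of the $(-1)^n$ and $(-1)^k$ in the parameters) has to be tracked explicitly, as you correctly flag.
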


\begin{remark}
\emph{(a)} \ We use the notation $\mathrm{Ind}_L^G(\pi_L)$ to denote the (real) parabolic induction $\mathrm{Ind}_{LN}^G(\pi_L \otimes \mathrm{triv})$ for any choice of the unipotent radical $N$. The induced representations are indeed isomorphic for different choices of $N$.

\emph{(b)} \ In Type D of the above theorem, we excluded the unipotent representations $\pi_{[2n, 1, 1]} = J
\begin{pmatrix}
&0, &\dots, & n-1, &\frac{1}{2}\\
&0, &\dots, & n-1, &-\frac{1}{2}
\end{pmatrix}$ which appear in \cite{BP1}. In fact,
$\pi_{[2n, 1, 1]} ={\rm Ind}_L^G(\mathbb{C}_{(1)} \otimes \pi_{[2n]})$, where $L$ is the Levi subgroup $L = GL(1) \times SO(2n)$.\\

\emph{(c)}\ Note that Conjecture 5.6 (ii)-(iv) of \cite{DD} follows directly from Theorem \ref{thm-unitarydual}. More precisely, any spherical, non-trivial representation in Theorem \ref{thm-unitarydual} must be a unipotent representation $\pi_{\mathcal{O}}$ with Zhelobenko parameters $J\begin{pmatrix} \lambda \\ \lambda \end{pmatrix}$. This includes exactly the following representations:
\begin{itemize}
\item[Type $B$:] $\pi_{[2k+1,2n]}$ for $k \geq n \geq 0$, with $2\lambda = (\overbrace{2k-1,2k-3, \dots, 2n+1}^{k-n}, 2n, \dots, 2,1)$. In terms of fundamental weights, $2\lambda$ is equal to
 $$[\overbrace{2,\dots,2}^{a},\overbrace{1,\dots,1}^{k+n-a-1},2]$$
where $a := \max\{k-n-1,0\}$.
\item[Type $C$:] The spherical metaplectic (or minimal) representation $\pi_{[2n-1,1]}$ {\bf for $n$ even}, with $2\lambda = (2n-1, 2n-3, \dots, 1)$. In terms of fundamental weights, $2\lambda$ is
equal to
$$[\overbrace{2,\dots,2}^{n-1},1].$$
\item[Type $D$:] $\pi_{[2n,4r-1,1]}$ with $n \geq 2r > 1$, with $2\lambda = (\overbrace{2n-2, 2n-4, \dots, 4r}^{n-2r}, 4r-1, \dots, 1,0)$.
In terms of fundamental weights, $2\lambda$ is equal to
$$[\overbrace{2,\dots,2}^{a},\overbrace{1,\dots,1}^{n+2r-a}],$$
where $a := \max\{n-2r-1,0\}$.
\end{itemize}
\end{remark}

\section{Scattered Representations}
In this section, we investigate which representations in Theorem \ref{thm-unitarydual} are scattered. Firstly, we recall the definition of scattered representations.

\begin{definition}\label{def-sc}
Let $\pi = J(\lambda,-s\lambda) \in \widehat{G}^d$. Then $\pi$ is a {\bf scattered representation} of $G$ if all the simple reflections of $W$
occur in any reduced expression of $s$. In such a case, we write that $\pi\in \widehat{G}^{sc}$.
\end{definition}

By Theorem A of \cite{DD},  $\widehat{G}^{sc}$ is a finite set. Moreover, the set $\widehat{G}^d$ can be completely recovered from the scattered representations  of $[L, L]$, where $L$ runs over the Levi factors of all the $\theta$-stable parabolic subgroups of $G$.

As in the case of $GL(n,\mathbb{C})$ in \cite{DW}, we use chains to
study the representations in $\widehat{G}^{sc}$.

\begin{definition}
The chains of each classical type is defined as follows:

\noindent \emph{(i)} Type A: Let $T \geq t$ be positive integers of the same parity, define
$$\mathcal{A}_{T,t} := \{T, T-2, \dots, t+2, t\}_A := \begin{pmatrix}
\frac{T}{2} & \frac{T-2}{2}& \dots & \frac{t+2}{2} & \frac{t}{2} \\
-\frac{t}{2} & -\frac{t+2}{2} & \dots & -\frac{T-2}{2} & -\frac{T}{2} \\
\end{pmatrix}$$
\noindent \emph{(ii)} Type B: Let $k \geq n \geq 0$ be integers, define
$$\mathcal{B}_{[2k+1,2n]} := \{2k-1, 2k-3, 2n+1, 2n, 2n-1, \dots, 1\}_B := \begin{pmatrix} \frac{1}{2}, \frac{3}{2}, \dots, \frac{2k-1}{2}; 1, 2, \dots, n \\
\frac{1}{2}, \frac{3}{2}, \dots, \frac{2k-1}{2}; 1, 2, \dots, n \end{pmatrix} $$
\noindent \emph{(iii)} Type C: Let $n$ be a positive integer, define
\begin{align*}
\mathcal{C}_{[2n]} &:= \{2n,2n-2,\dots,2\}_C := \begin{pmatrix}{1,\dots
  ,n} \\ {1,\dots
  ,n} \end{pmatrix}; \\
\mathcal{C}_{[2n-1,1]} &:= \{2n-1,2n-3,\dots,1\}_C := \begin{pmatrix}{ \frac{1}{2},\dots
  ,\frac{2n-1}{2}} \\ { \frac{(-1)^n}{2},\dots
  ,\frac{2n-1}{2}} \end{pmatrix}
\end{align*}
\noindent \emph{(iv)} Type D: Let $n > 0$ be an integer, define
$$\mathcal{D}_{[2n]} := \{2n-2,2n-4,\dots,0\}_D := \begin{pmatrix}{0,\dots,
  n-1} \\ {0,\dots,
  n-1} \end{pmatrix};$$
Also, let $n \geq k > 1$ be integers, define	
\begin{align*}
\mathcal{D}_{[2n,2k-1,1]} &:= \{2n-2,2n-4,\dots,2k,2k-1,\dots,1,0\}_D \\
&:= \begin{pmatrix} 0,\dots
  ,n-1; &\frac{1}{2},\dots ,\frac{2k-1}{2} \\ 0,\dots
  ,n-1; &\frac{(-1)^k}{2},\dots ,\frac{2k-1}{2} \end{pmatrix}
	\end{align*}

If the subscript of a chain is omitted, we assume that the chain is of Type $A$.
\end{definition}

Using Theorem \ref{thm-unitarydual} and the definition above, all $J(\lambda,-s\lambda) \in \widehat{G}^d$
have Zhelobenko parameters of the form
$$(\lambda, -s\lambda) = \mathcal{X}_{\mathcal{O}}   \quad \text{or} \quad   \bigcup_{i=1}^l \mathcal{A}_{T_i, t_i} \quad \text{or} \quad \bigcup_{i=1}^l \mathcal{A}_{T_i, t_i} \cup \mathcal{X}_{\mathcal{O}},$$
where $\mathcal{X} =$ $\mathcal{B}$, $\mathcal{C}$ or $\mathcal{D}$, and the coordinates of all chains constitute $2\lambda$. In particular, the coordinates of the union of chains must be distinct.

In order to show which $(\lambda, -s\lambda)$ gives rise to scattered representations, we
extend the definition of interlaced chains in \cite{DW} as follows:
\begin{definition}
\begin{itemize}
\item[(a)] Two chains $\mathcal{X}_1 = \{M, \dots, m\}_X$, $\mathcal{X}_2 = \{N, \dots, n\}_X$,
where $X = A, B, C$ or $D$, are {\bf linked} if
the entries of $\mathcal{X}_1$ and $\mathcal{X}_2$ are disjoint, and either one of the following holds:
\begin{itemize}
\item[$\bullet$] $M > N > m $; or
\item[$\bullet$] $N > M > n$; or
\item[$\bullet$] $\{\mathcal{X}_1$, $\mathcal{X}_2\}$ $=$ $\{\mathcal{C}_{[2n]}, \mathcal{A}_{(1,1)}\}$.
\end{itemize}
\item[(b)] We say a union of chains  $\displaystyle \bigcup_{i \in I} \mathcal{X}_i$ is {\bf interlaced} if for all $i \neq j$ in $I$,
there exist indices $i = i_0, i_1, \dots, i_k = j$ in $I$ such that $\mathcal{X}_{i_{l-1}}$ and
$\mathcal{X}_{i_{l}}$ are linked for all $1 \leq l \leq k$.
(By convention, we also consider the single chain $\mathcal{X}$ to be interlaced).
\end{itemize}
\end{definition}

Among all $\pi \in \widehat{G}^d$ given in \cite{BDW},
the following theorem allows us to pick out those appearing in $\widehat{G}^{sc}$.

\begin{theorem} \label{thm-scattered}
Take any $\pi\in\widehat{G}^d$ as described in Theorem \ref{thm-unitarydual}.
Then $\pi$ is scattered if and only if the chains in
its Zhelobenko parameters $\bigcup_{i=1}^l \mathcal{A}_{T_i, t_i} \cup \mathcal{X}_{\mathcal{O}}$ are interlaced.
\end{theorem}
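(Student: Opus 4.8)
The plan is to relate the ``scattered'' condition---every simple reflection of $W$ appears in a reduced expression of the involution $s$---to the combinatorics of the chains. First I would observe that it suffices to understand, for each $\pi = J(\lambda,-s\lambda)$ of the form in Theorem~\ref{thm-unitarydual}, exactly which simple reflections of $W = W(\fg_0,\fh_0)$ are supported by $s$. Since $2\lambda$ is dominant regular, $s$ is determined by the pairing structure on the coordinates of $2\lambda$ encoded by the Zhelobenko parameter $(\lambda,-s\lambda)$: a chain $\C A_{T,t}$ (or the ``tail'' of $\C X_\C O$) records precisely which coordinate of $\lambda$ is sent to the negative of which coordinate of $-s\lambda$. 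So the support of $s$ can be read off the chains. The key reduction is therefore: \emph{the simple reflection $s_\beta$ for a simple root $\beta$ is missing from (every reduced expression of) $s$ if and only if $s$ preserves the ``wall'' $H_\beta$, which happens if and only if the chains split into two groups living on disjoint intervals of coordinate positions separated at $\beta$.} This is the standard fact that the support of $w$ is the complement of $\{\,\alpha \text{ simple} : w \text{ fixes } \varpi_\alpha^\vee\text{-wall}\,\}$; I would invoke it, then translate ``$s$ respects a separation of the coordinate index set'' into ``the chains are \emph{not} interlaced.''

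Concretely, I would argue both directions via the linking relation. Suppose the union $\bigcup_i \C A_{T_i,t_i}\cup \C X_\C O$ is \emph{not} interlaced. Then the index set $I$ partitions into two nonempty subsets $I_1 \sqcup I_2$ with no chain in $I_1$ linked to any chain in $I_2$; by the definition of ``linked'' (the three bullet conditions on $M>N>m$, $N>M>n$, and the exceptional $\{\C C_{[2n]},\C A_{(1,1)}\}$ pair), non-linkedness of two chains with disjoint entries forces their coordinate intervals to be \emph{nested} rather than \emph{overlapping}. A short induction then shows that all coordinates coming from $I_1$ can be separated from those coming from $I_2$ by a single simple root $\beta$ (in type A this is literally an ``outer vs.\ inner'' split of intervals; in types B, C, D one also has to track the short/long root or the fork at the end of the Dynkin diagram, which is exactly what the special bullet for $\{\C C_{[2n]},\C A_{(1,1)}\}$ and the nesting analysis handle). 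For such a $\beta$, the involution $s$---which acts within each nested block by the unipotent ``flip'' coming from that block alone---fixes the $\beta$-wall, so $s_\beta$ does not occur in $s$, and $\pi$ is not scattered. Conversely, if the chains \emph{are} interlaced, I would show that for every simple $\beta$ there is a pair of linked chains straddling $\beta$, forcing $s$ to move the $\beta$-wall, hence $s_\beta \in \mathrm{supp}(s)$; running over all simple $\beta$ gives that $\pi$ is scattered.

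The main obstacle I anticipate is the bookkeeping needed to pin down the support of $s$ in the non-type-A cases, especially the boundary behaviour: in type C the metaplectic chain $\C C_{[2n-1,1]}$ has the asymmetric $\tfrac{(-1)^n}{2}$ entry, in type D one has the two ``spin'' nodes at the fork and the parity conditions ($4r-1$ vs.\ $2k-1$) cutting down which $\C D_{[2n,2k-1,1]}$ are even scattered, and the exceptional linking rule involving $\C A_{(1,1)}$ and $\C C_{[2n]}$ exists precisely to repair one such boundary anomaly. I would handle this by first proving the clean statement for type A (essentially the interlacing criterion of \cite{DW}), then treating $\C X_\C O$ as a distinguished chain attached at the small end of the coordinate line and checking case-by-case that its linking relations to the $\C A_{T_i,t_i}$'s capture exactly the simple reflections near the special node(s) of the Dynkin diagram. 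The regularity and dominance of $2\lambda$ guarantee that no coordinates collide, so at no point do I have to worry about stabilizers larger than expected; this is why the hypothesis ``the coordinates of the union of chains must be distinct'' is recorded just before the theorem.
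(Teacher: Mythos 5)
Your two-step strategy---(i) $\mathcal{X}_{\mathcal{O}}$ must be present, or else the simple reflection at the special node of the Dynkin diagram (the short root in type $B$, long root in type $C$, one of the fork roots in type $D$) is missing from $\mathrm{supp}(s)$, and (ii) the remaining interlacing analysis reduces to the type $A$ argument of \cite{DW}---is exactly the paper's own, very terse, proof. One slip to correct: non-linkedness of two chains with disjoint entries forces their coordinate intervals to be \emph{disjoint}, not ``nested'' as you write; nested intervals satisfy $M > N > m$ and hence \emph{are} linked. Your subsequent separation argument (``all coordinates coming from $I_1$ can be separated from those coming from $I_2$ by a single simple root $\beta$'') only goes through with the disjoint-interval reading, so this is a typo rather than a conceptual gap, but it should be fixed.
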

\begin{proof}
There must be a chain $\mathcal{X}_{\mathcal{O}}$ of Type $X$, or else the reduced expressions of $s \in W$ do not contain the short/long root in Type B/C, or one of the roots at the fork in Type $D$.
The fact that they are all interlaced follows directly from the arguments in \cite{DW}.
\end{proof}

Now let us give two applications of Theorem \ref{thm-scattered}.

\subsection{The spin-lowest $K$-type is unitarily small}
The notion of unitarily small $K$-types was introduced by Salamanca-Riba and Vogan \cite{SV} in their uniform conjecture about the unitary dual of real reductive Lie groups.
In our setting, a $K$-type $V_{\mu}^K$ is unitarily small if and only if its highest weight $\mu$ lies in the convex hull generated by the points $\{w\rho\mid w\in W\}$.

\begin{corollary}\label{Cor-usmall}
The spin-lowest $K$-type of any scattered representation is unitarily small.
\end{corollary}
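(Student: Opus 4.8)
The plan is to reduce the statement to a concrete inequality about weights and then verify it using the explicit combinatorial description of the spin-lowest $K$-types supplied by Theorem \ref{thm-scattered} together with the chain data from Theorem \ref{thm-unitarydual}. Recall that a $K$-type $V_\mu^K$ is unitarily small precisely when $\mu$ lies in the convex hull of $\{w\rho : w\in W\}$; equivalently (since this convex hull is $W$-stable), writing $\mu^{+}$ for the dominant $W$-conjugate of $\mu$, one needs $\mu^{+} \preceq \rho$ in the dominance order, \ie $\rho - \mu^{+}$ is a nonnegative rational combination of simple roots. So the first step is to fix $\pi = J(\lambda,-s\lambda)\in\widehat{G}^{sc}$, let $V_\tau^K$ be a spin-lowest $K$-type, and reduce the claim to verifying $\{\tau\}\preceq\rho$.

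Next I would bring in the characterization of spin-lowest $K$-types for $\pi\in\widehat{G}^d$ from Section \ref{sec-slkt}: for such $\pi$ the spin norm attains its lower bound $\|2\lambda\|$, and \eqref{eq-par} says the spin-lowest $K$-types $V_\mu^K$ are exactly those with $\{\mu-\rho\}+\rho = 2\lambda$, \ie $\{\mu-\rho\} = 2\lambda-\rho$. Hence a spin-lowest $K$-type $\tau$ satisfies $\tau - \rho \in W\cdot(2\lambda-\rho)$, and after applying a Weyl group element we may as well analyze the dominant representative: the controlling inequality becomes $\{\tau\}\preceq\rho$, where $\{\tau\}$ is determined (up to the subtlety of which $W$-conjugate of $\tau-\rho+\rho$ is dominant) by $2\lambda$. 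Because $2\lambda$ is built from the coordinates of the interlaced chains $\bigcup_i\mathcal{A}_{T_i,t_i}\cup\mathcal{X}_{\mathcal O}$ with distinct entries, this is now a finite, type-by-type bookkeeping problem: in each of types $B$, $C$, $D$ one writes down $2\lambda$ from the chain formulas in the Remark following Theorem \ref{thm-unitarydual}, writes down $\rho = (n-\tfrac12,\dots,\tfrac12)$, $(n,\dots,1)$, $(n-1,\dots,1,0)$ respectively, computes $\{\tau\}$, and checks the dominance inequality coordinatewise (equivalently, that all partial sums of $\rho-\{\tau\}$ from the end are $\ge 0$, with the usual modification at the fork in type $D$).

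The key structural input that makes this work — and the step I expect to be the main obstacle — is controlling $\{\tau\}$, \ie identifying the dominant $W$-conjugate of a spin-lowest $K$-type explicitly rather than just knowing $\tau-\rho$ lies in a fixed $W$-orbit. The interlacing hypothesis is exactly what is needed here: when the chains are interlaced, the coordinates of $2\lambda$ are "packed" tightly enough (no large gaps, governed by the fundamental-weight expressions $[2,\dots,2,1,\dots,1,\dots]$ in the Remark) that the dominant $\{\tau\}$ stays inside the $\rho$-hull; if a gap were too large the corresponding $A$-chain would detach and $\pi$ would fail to be scattered. So the proof amounts to: (1) translate "unitarily small" into the dominance inequality $\{\tau\}\preceq\rho$; (2) use \eqref{eq-par} to pin down $\tau-\rho$ as a $W$-translate of $2\lambda-\rho$; (3) in each classical type, using the explicit $2\lambda$ from the interlaced chain data, compute $\{\tau\}$ and verify $\rho-\{\tau\}$ has nonnegative partial sums. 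The routine but slightly delicate part is handling the half-integer coordinates and the $(-1)^k/2$ or $(-1)^n/2$ entries in types $C$ and $D$, and the fork in type $D$; I would isolate the type-$D$ fork case and treat it separately rather than forcing it into the general partial-sum argument.
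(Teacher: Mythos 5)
Your high-level reduction is sound: unitarily small is equivalent, for a dominant $\mu$, to $\rho - \mu$ lying in the nonnegative cone of simple roots, and you correctly observe that interlacing is what constrains the coordinates of $2\lambda$. But your plan then stalls at exactly the point where the paper's argument turns: you treat ``identifying the dominant spin-lowest $K$-type $\tau$ explicitly'' as the main obstacle and propose a type-by-type computation, which would in effect require the full statement of Theorem~\ref{thm-slkt} before Corollary~\ref{Cor-usmall} can even be stated. The paper sidesteps this entirely. Its proof is two sentences: interlacing implies that \emph{adjacent coordinates of $2\lambda$ differ by at most one} (a crisp, checkable property which your ``packed tightly enough'' gestures at but never pins down), and then Lemma~3.4 of \cite{DW} applies. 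That lemma is the missing ingredient in your proposal: it shows that this single structural condition on $2\lambda$, together with \eqref{eq-par}, already forces any $K$-type $\mu$ satisfying $\{\mu-\rho\}+\rho = 2\lambda$ to lie in the $\rho$-hull, without ever computing $\mu$ or sorting out which $W$-conjugate is dominant.

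So the gap is concrete: you need to either prove the analogue of Lemma~3.4 of \cite{DW} for types $B$, $C$, $D$ (showing the dominance bound follows from the ``adjacent coordinates differ by $\le 1$'' property of $2\lambda$ alone), or you need Theorem~\ref{thm-slkt} already in hand and a coordinate-by-coordinate check of $\rho - \mu$ for each of the five cases of $\theta_{\mathcal{O}}$ in \eqref{eq-thetao} plus the $\nu_i$ corrections. You have neither, and your remark that controlling $\{\tau\}$ is ``the main obstacle'' is really a symptom of not having the right lemma: the whole point of the cited argument is that the explicit $\tau$ is irrelevant. Before filling in a type-by-type computation, I would first state and prove the precise ``consecutive coordinates of $2\lambda$'' observation and then see whether the $SL(n)$ proof of Lemma 3.4 of \cite{DW} carries over verbatim; in fact it does, because the dominance comparison $\mu \preceq \rho$ is insensitive to the type-$D$ fork after one passes to partial sums, so the separate treatment you flagged there is unnecessary.
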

\begin{proof}
Note that by our construction of interlaced chains, the adjacent coordinates of $2\lambda$ differ by at most one.
The corollary therefore follows directly from Lemma 3.4 of \cite{DW}.
\end{proof}

\subsection{Number of scattered representations}
Now let us count the number of
scattered representations as in Section 3 of \cite{DW}.

\begin{corollary}\label{Cor-number}
Let $b_n, c_n, d_n$ be the number of scattered representations of
Type $B$, $C$, $D$ of rank $n$ respectively. Then we have the following recursive
formulas:
\begin{itemize}
\item Type B: $b_2 = 2$,
$$b_{n+1} = \begin{cases}
2b_n-1 & \text{if n is even}\\
2b_n & \text{if n is odd}
\end{cases} \quad \text{for}\ n \geq 2.$$
\item Type C: $c_2 = 3$,
$$c_{n+1} = 2c_n \quad \text{for}\ n \geq 2.$$
\item Type D: $d_3 = 2$, $d_4 = 5$,
$$d_{n+1} = \begin{cases}
2d_n-1 & \text{if n is even}\\
2d_n & \text{if n is odd}
\end{cases} \quad \text{for}\ n \geq 4.$$
\end{itemize}
In particular, the number of scattered representations of Type $C_n$ is given by $c_n = 3\cdot 2^{n-2}$ for all $n \geq 2$.
\end{corollary}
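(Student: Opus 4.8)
The plan is to run the counting argument of \cite[Section 3]{DW} type by type. By Theorem \ref{thm-scattered} (together with Theorem \ref{thm-unitarydual}), the scattered representations of a fixed classical type and rank $n$ are in bijection with the \emph{interlaced} unions of chains $\bigcup_{i=1}^{l}\mathcal{A}_{T_i,t_i}\cup\mathcal{X}_{\mathcal{O}}$ having exactly one non-type-$A$ chain $\mathcal{X}_{\mathcal{O}}$, pairwise distinct coordinates, and total length $n$. Finiteness (Theorem A of \cite{DD}) and the packing observation used in the proof of Corollary \ref{Cor-usmall} show that each such configuration is encoded by a small amount of combinatorial data (in particular the chains of an interlaced family are nested in a tree-like fashion), so the task is purely to enumerate these configurations for each $n$. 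A preliminary lemma I would isolate is the description of this nesting structure, since it is what makes the recursion below manageable.

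The heart of the proof is a recursion coming from a two-to-one folding map $\Phi$ from the rank-$(n+1)$ configurations onto the rank-$n$ configurations. Given a rank-$(n+1)$ configuration, $\Phi$ deletes its largest coordinate $M$, which lies either at the top of $\mathcal{X}_{\mathcal{O}}$ (shortening it by one box) or at the top of some type-$A$ chain $\mathcal{A}_{M,t}$ (replacing it by $\mathcal{A}_{M-2,t}$, or deleting it when $M=t$); in the few situations where this would disconnect the linking graph one instead deletes a suitable nearby coordinate, and one checks using the explicit form of the linking relation that the output is again interlaced. Conversely, a rank-$n$ configuration $\mathcal{Y}$ is the $\Phi$-image of exactly two rank-$(n+1)$ configurations, produced by the two growth moves ``extend the chain carrying the top coordinate of $\mathcal{Y}$ by one box'' and ``insert the next coordinate in the nesting order, either as a fresh type-$A$ singleton or by lengthening an existing type-$A$ chain'', whichever is compatible with interlacing. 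This two-to-one behaviour gives $c_{n+1}=2c_n$ in type $C$, where both unipotent families $\mathcal{C}_{[2m]}$ and $\mathcal{C}_{[2m-1,1]}$ are available without a parity obstruction. In types $B$ and $D$, however, one of the two growth moves is occasionally blocked: the admissibility constraints $k\ge n\ge 0$ in type $B$ and $n\ge k>1$ in type $D$, together with the parity forced by the bottom rows of the unipotent parameters and by the regularity of $2\lambda$ at the short root, respectively at the fork, cause exactly one fiber of $\Phi$ to have size $1$ when $n$ is even and none when $n$ is odd, which accounts for the $-1$ in $b_{n+1}=2b_n-1$ and $d_{n+1}=2d_n-1$.

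It then remains to pin down the base cases by direct enumeration --- $b_2=2$ (the two single chains $\mathcal{B}_{[5,0]}$ and $\mathcal{B}_{[3,2]}$, there being no room for a type-$A$ chain), $c_2=3$ (the configurations $\mathcal{C}_{[4]}$, $\mathcal{C}_{[3,1]}$, and $\mathcal{C}_{[2]}\cup\mathcal{A}_{1,1}$, the last using the special link $\{\mathcal{C}_{[2n]},\mathcal{A}_{(1,1)}\}$), and $d_3=2$ (namely $\mathcal{D}_{[6]}$ and $\mathcal{D}_{[4]}\cup\mathcal{A}_{1,1}$), $d_4=5$ --- and to solve $c_{n+1}=2c_n$ with $c_2=3$, which gives the closed form $c_n=3\cdot 2^{n-2}$ for all $n\ge 2$.

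I expect the main obstacle to be the case analysis underlying the two-to-one claim, and in particular the bookkeeping of the exceptional fibers in types $B$ and $D$: one has to show that the interaction between the parity of $n$ and the restrictions on which $\mathcal{X}_{\mathcal{O}}$ (and with which parity of its half-integer entries) can occur blocks \emph{exactly one} growth move when $n$ is even and none when $n$ is odd --- no more and no fewer. The subtlety is concentrated at the short root in type $B$ and at the fork in type $D$, which is also why type $C$, lacking this obstruction, produces the clean doubling; once the recursion is in place, both the type-$C$ closed form and the listed values follow immediately.
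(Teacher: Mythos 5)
Your proposal takes essentially the same route as the paper: enumerate the small-rank cases directly, then establish a doubling recursion with a single parity-dependent exceptional configuration (the all-consecutive chain $\mathcal{B}_{[2m+1,2m]}$, resp.\ $\mathcal{D}_{[2m,2m-1,1]}$, at even rank $2m$) in types $B$ and $D$; this is exactly the paper's appeal to an analog of Algorithm 3.6 of \cite{DW}, just phrased in reverse as a two-to-one folding map. One minor imprecision: your second growth move is not always realized by adding or lengthening a type-$A$ chain --- for instance in Example~\ref{eg-lowrank} the second child of $\{3\ 1\}\cup\{2\}_C$ is $\{3\ 1\}\cup\{4\ 2\}_C$, which lengthens $\mathcal{X}_{\mathcal{O}}$ --- but this does not affect the count and the recursion still holds.
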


\begin{proof}
Firstly, we list the scattered representations for each type of small rank:
\smallskip

Type $B_2$: $\{3\ 1\}_B$, $\{2\ 1\}_B$.
\smallskip

Type $B_3$: $\{5\ 3\ 1\}_B$, $\{2\} \cup \{3\ 1\}_B$, $\{3\ 2\ 1\}_B$.
\smallskip

Type $C_2$: $\{3\ 1\}_C$, $\{2\}_C \cup \{1\}$, $\{4\ 2\}_C$.
\smallskip

Type $C_3$: $\{5\ 3\ 1\}_C$, $\{2\} \cup \{3\ 1\}_C$, $\{1\} \cup \{4\ 2\}_C$, $\{3\ 1\}\cup \{2\}_C$, $\{6\ 4\ 2\}_C$, $\{3\} \cup \{4\ 2\}_C$.

\smallskip
Type $D_3$: $\{4\ 2\ 0\}_D$, $\{1\} \cup \{2\ 0\}_D$.

\smallskip
Type $D_4$: $\{6\ 4\ 2\ 0\}_D$, $\{3\} \cup \{4\ 2\ 0\}_D$, $\{1\} \cup \{4\ 2\ 0\}_D$, $\{3\ 1\}\cup \{2\ 0\}_D$, $\{3\ 2\ 1\ 0\}_D$.

\medskip

This verifies the corollary for small ranks. For the recursive formula, we can apply an analog of Algorithm 3.6 of \cite{DW} to construct new scattered representations of Type $X_{n+1}$ ($X = B, C, D$) from those of Type $X_n$
(see the example below). Then the result follows.
\end{proof}

\begin{example} \label{eg-lowrank}
We begin with the recursion of Type $C$, which is exactly the same as that in \cite{DW}:
For example, the $c_4 = 12$ scattered representations of Type $C_4$ are obtained from the $c_3 = 6$
scattered representations of Type $C_3$ by:
\begin{align*}
\{5\ 3\ 1\}_C \quad &&\mapsto \quad &&\{7\ 5\ 3\ 1\}_C, &&\{4\} \cup \{5\ 3\ 1\}_C \\
\{2\} \cup \{3\ 1\}_C \quad &&\mapsto \quad &&\{2\} \cup \{5\ 3\ 1\}_C, &&\{4\ 2\} \cup \{3\ 1\}_C \\
\{1\} \cup \{4\ 2\}_C \quad &&\mapsto  \quad &&\{3\} \cup \{1\} \cup \{4\ 2\}_C, &&\{1\} \cup \{6\ 4\ 2\}_C\\
\{3\ 1\} \cup \{2\}_C \quad &&\mapsto \quad &&\{5\ 3\ 1\}\cup \{2\}_C, &&\{3\ 1\}\cup \{4\ 2\}_C \\
\{6\ 4\ 2\}_C \quad &&\mapsto \quad &&\{8\ 6\ 4\ 2\}_C, &&\{5\} \cup \{6\ 4\ 2\}_C \\
\{3\} \cup \{4\ 2\}_C \quad &&\mapsto \quad &&\{5\ 3\} \cup \{4\ 2\}_C, &&\{3\} \cup \{6\ 4\ 2\}_C.
\end{align*}
As for special orthogonal groups, Algorithm 3.6 of \cite{DW} applies as in the
case of symplectic groups {\bf with the following exceptions}:
\begin{itemize}
\item In Type $B_{2n}$, we only have
$$\{2n\ 2n-1\ \dots \ 2\ 1\}_B \mapsto \{2n+1\ 2n\ 2n-1\ \dots \ 2\ 1\}_B.$$
Namely, there is no $\{2n+2\ 2n\ 2n-1\ \dots \ 2\ 1\}_B$.
\item In Type $D_{2n}$, we only have
$$\{2n-1\ 2n-2\ \dots\ \ 1\ 0\}_D \mapsto \{2n\ 2n-1\ 2n-2\ \dots\ 1\ 0\}_D.$$
Namely, there is no $\{2n+1\ 2n-1\ 2n-2\ \dots\ 1\ 0\}_D$.
\end{itemize}
In both cases, the latter parameter does not give a unipotent representation in Theorem \ref{thm-unitarydual}. This explains the
discrepancy between the even and odd $b_n$ and $d_n$ in the formulas of Corollary \ref{Cor-number}.
\end{example}

\section{Spin Lowest $K$-type}

We now investigate the spin lowest $K$-type of all the scattered representations, which must be of the form
\begin{equation} \label{eq-indscattered}
{\rm Ind}_L^G(\pi_L) = {\rm Ind}_{\prod_i GL(a_i) \times G'}^G\left(\bigotimes_i \mathbb{C}_{\chi_i} \otimes \pi_{\mathcal{O}}\right) \in \widehat{G}^{sc},
\end{equation}
according to Theorem \ref{thm-unitarydual}. Here each $\mathbb{C}_{\chi_i}$ has Zhelobenko parameter $\mathcal{A}_i := \mathcal{A}_{T_i, t_i}$. By switching the order of the Levi factors if necessary, we assume that $T_i + t_i \geq T_j + t_j$ for all $i \leq j$.

\medskip

By induction in stages, consider
\begin{equation} \label{eq-pia}
\pi_A := {\rm Ind}_{\prod_i GL(a_i)}^{GL(\sum_i a_i)}(\bigotimes_i \mathbb{C}_{\chi_i}).
\end{equation}
The spin lowest $K$-type of $\pi_A$ is equal to $V^A_{(\theta_1; \dots; \theta_l)}$, where $(\theta_1;\dots;\theta_l)$ is given
in Section 2 of \cite{DW} (our choice of ordering in the previous paragraph guarantees that it is a dominant weight).
Also, the spin lowest $K$-type of $\pi_{\mathcal{O}}$ is given in Sections 5.4 -- 5.6 of \cite{BP1}. More precisely, it is equal to $V_{\theta_{\mathcal{O}}}^{K'}$, where
\begin{equation} \label{eq-thetao}
\theta_{\mathcal{O}} = \begin{cases}
((k+n-1)^2 (k+n-3)^2 \dots (k-n+1)^2 0^{k-n}) & \text{if}\ \mathcal{O} = [2k+1,2n]\ \text{in Type B} \\
(0^{n}) & \text{if}\ \mathcal{O} = [2n]\ \text{in Type C} \\
(n^10^{n-1}) & \text{if}\ \mathcal{O} = [2n-1,1]\ \text{in Type C} \\
(0^{n}) & \text{if}\ \mathcal{O} = [2n]\ \text{in Type D} \\
((n+k-1)^1 (n+k-2)^1\dots (n-k)^1 0^{n-k}) & \text{if}\ \mathcal{O} = [2n,2k-1,1]\ \text{in Type D}
\end{cases},
\end{equation}
where $(n_1^{p_1}, n_2^{p_2}, \dots)$ is the shorthand of $(\overbrace{n_1,\dots,n_1}^{p_1},\overbrace{n_2,\dots,n_2}^{p_2},\dots)$.
In all cases, we separate the non-zero and zero coordinates of $\theta_{\mathcal{O}}$ by writing $\theta_{\mathcal{O}} = (\theta_{\mathcal{O}, +}; 0, \dots, 0)$.

\begin{theorem} \label{thm-slkt}
Let $\pi \in \widehat{G}^{sc}$ be of the form of \eqref{eq-indscattered}. Then the spin lowest $K$-type of $\pi$
is obtained as follows: \\
\noindent \emph{(i)}\ Take $V^A_{(\theta_1;\dots;\theta_l)}$ and $V^{K'}_{\theta_{\mathcal{O}}}$ as given in \eqref{eq-indscattered}.

\noindent\emph{(ii)}\ For each $1 \leq i \leq l$, construct $\mu_i$ from $\theta_i$ by the following:
\begin{itemize}
\item If $\mathcal{A}_i$ is linked with $\mathcal{X}_{\mathcal{O}}$, we have two possibilities:
\begin{itemize}
\item Suppose $\mathcal{A}_i$ and $\mathcal{X}_{\mathcal{O}}$ are linked by
\begin{align*}
\quad \quad \quad \mathcal{A}_i = \{A_1,\ \ \dots,\ \ A_{q-p}&,\ \  A_{q-p+1},\ \ \dots,\ \ A_q \} \\
&\{X_1, \ \ \  \dots\dots,\ \ \ X_p,\ \ \ \ X_{p+1},\ \ \dots, \ \  X_r\} = \mathcal{X}_{\mathcal{O}},
\end{align*}
take $\nu_i := (p,p-1,\dots,1)$, and $\mu_i$ is obtained from $\theta_i$ by adding $\nu_i$ on the
$p$ coordinates $(\dots, \underbrace{(A_1+A_q)/2, \dots, (A_1+A_q)/2}_{p}, \dots)$ of $\theta_i$.

\item Suppose $\mathcal{A}_i$ and $\mathcal{X}_{\mathcal{O}}$ are linked by
\begin{align*}
\mathcal{A}_i = \quad \quad \quad \quad \quad \quad \{A_1, \ \ \dots ,\ \ &A_q\} \\
\{X_1,\ \ \ \ \ \ \dots, \ \ \ \ \ \   X_p,&\ \ \ \ \ \ X_{p+1}, \ \ \dots, \ \ X_r\}  = \mathcal{X}_{\mathcal{O}}
\end{align*}
(this includes the case $\begin{matrix} &\{1\} \\ \{2n\ \dots\ 4 \ 2 \}_C& \end{matrix}$), take $\nu_i := (p,p-1,\dots, p-q+1)$ and
$\mu_i$ is obtained from $\theta_i$ by adding $\nu_i$ on
$\theta_i = (\underbrace{(A_1+A_q)/2,\dots,(A_1+A_q)/2}_{q})$.
\end{itemize}

\item If $\mathcal{A}_i$ is not linked with $\mathcal{X}_{\mathcal{O}}$, take $\mu_i = \theta_i$.
\end{itemize}
\noindent\emph{(iii)}\ Suppose $\mathcal{A}_{i_1}, \dotsm \mathcal{A}_{i_j}$ are the chains that are linked to $\mathcal{X}_{\mathcal{O}}$ such that $i_1 < i_2 < \cdots < i_j$. Let
$\mu_{\mathcal{O}} = (\theta_{\mathcal{O}, +}; \nu_{i_j};\dots;$ $\nu_{i_1}; 0, \dots, 0)$.
\smallskip

Then the spin lowest $K$-type of $\pi$ is given by $V^K_{\mu}$, where $\mu := (\mu_1, \dots, \mu_l;\mu_{\mathcal{O}})$.
\end{theorem}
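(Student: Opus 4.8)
The plan is to reduce the computation of the spin lowest $K$-type of $\pi = {\rm Ind}_L^G(\pi_L)$ to the behavior of spin norms under parabolic induction, and then to an explicit combinatorial/Littlewood--Richardson analysis of how the $K'$-types of $\pi_{\mathcal O}$ combine with the $A$-types of $\pi_A$. The starting point is that $\pi \in \widehat G^d$, so by \eqref{eq-par}--\eqref{Dirac-inequality} its spin lowest $K$-types are exactly the $K$-types $V_\mu^K$ in $\pi$ with $\{\mu-\rho\}+\rho = 2\lambda$, i.e. those making Parthasarathy's inequality an equality; and $2\lambda$ is precisely the concatenation of the coordinates of all the chains $\mathcal A_{T_i,t_i}$ and $\mathcal X_{\mathcal O}$. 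So the proof amounts to showing that the weight $\mu = (\mu_1,\dots,\mu_l;\mu_{\mathcal O})$ constructed in (i)--(iii) (a) is a $K$-type of $\pi$, and (b) satisfies $\{\mu-\rho\}+\rho = 2\lambda$, together with uniqueness (the spin lowest $K$-type of a scattered representation is known to be unique, which one cites from \cite{D1} or \cite{BDW}).

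First I would record the branching/induction input. By induction in stages, $\pi = {\rm Ind}_{GL(\sum a_i)\times G'}^G(\pi_A \otimes \pi_{\mathcal O})$, so every $K$-type of $\pi$ arises, with positive multiplicity governed by the relevant Littlewood--Richardson / restriction rule, from a $GL$-type $V_\sigma^A$ of $\pi_A$ and a $K'$-type $V_\tau^{K'}$ of $\pi_{\mathcal O}$. The spin lowest $K$-types of $\pi_A$ (namely $V^A_{(\theta_1;\dots;\theta_l)}$) and of $\pi_{\mathcal O}$ (namely $V^{K'}_{\theta_{\mathcal O}}$, from \eqref{eq-thetao}) are the unique minimizers of the respective spin norms. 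The key structural fact I would isolate and prove as a lemma is an ``additivity up to interlacing'' statement for spin norms: because the chains are interlaced (Theorem \ref{thm-scattered}), the obvious lower bound $\|\mu\|_{\rm spin} \ge \|2\lambda\|$ is achieved not by the naive tensor $(\theta_1;\dots;\theta_l;\theta_{\mathcal O})$ but by the ``shifted'' weight $\mu$, where the shift $\nu_i = (p,p-1,\dots)$ exactly accounts for the overlap (linking) between $\mathcal A_i$ and $\mathcal X_{\mathcal O}$ in the multiset $2\lambda$. Concretely, when $\mathcal A_i$ and $\mathcal X_{\mathcal O}$ are linked, the coordinates of $\mathcal A_i$ and of $\mathcal X_{\mathcal O}$ interleave in $2\lambda$, and after the $W$-conjugation $\{\mu-\rho\}$ and re-adding $\rho$, this interleaving is precisely reproduced by adding the staircase $\nu_i$ in the appropriate block — this is the content of (ii), and the two cases there are the two linking configurations $M>N>m$ and $N>M>n$ from the definition of ``linked'' (plus the exceptional $\{\mathcal C_{[2n]},\mathcal A_{(1,1)}\}$ case).

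The verification then splits into three routine-but-careful pieces. (1) Show $V_\mu^K$ occurs in $\pi$: this is a Littlewood--Richardson positivity check — $\mu$ restricted appropriately must contain $\theta_{\mathcal O}$ (resp. the $\theta_i$) with the staircase pieces $\nu_i$ landing in the zero-block of $\theta_{\mathcal O}$, which is exactly why $\mu_{\mathcal O} = (\theta_{\mathcal O,+};\nu_{i_j};\dots;\nu_{i_1};0,\dots,0)$ is built by inserting the $\nu_i$'s into the zeros of $\theta_{\mathcal O}$; one invokes the known $K$-type decomposition of $\pi_{\mathcal O}$ (Sections 5.4--5.6 of \cite{BP1}) and of principal-series-type induced modules. (2) Compute $\{\mu-\rho\}+\rho$ and check it equals $2\lambda$: subtract $\rho$, sort into dominant position (this is where the interlacing hypothesis is used — the sorted result recombines the chain coordinates into the multiset $2\lambda$), add $\rho$ back. (3) Uniqueness: cite that a scattered representation has a unique spin lowest $K$-type, or argue directly that any other $K$-type meeting the Parthasarathy equality would have to differ from $\mu$ by a permutation that violates dominance. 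I expect step (2) — pinning down exactly how $\{\mu-\rho\}$ reorders the coordinates and confirming the staircase shifts $\nu_i$ are forced — to be the main obstacle, since it requires a uniform case analysis over all five types of $\mathcal X_{\mathcal O}$ in \eqref{eq-thetao} and over the linking patterns; the cleanest route is to treat one chain $\mathcal A_i$ linked to $\mathcal X_{\mathcal O}$ at a time (legitimate because interlacing lets one peel off chains one by one, as in the recursion of \cite{DW}), reducing to a two-chain computation that matches Lemma/Section 2 of \cite{DW} in the $GL$ part and a short direct check in the $\mathcal X_{\mathcal O}$ part.
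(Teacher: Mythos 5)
Your broad strategy coincides with the paper's: show that $V^K_{\mu}$ occurs in $\pi$, show that $\mu$ satisfies the Parthasarathy equality \eqref{eq-par}, and invoke uniqueness of the spin-lowest $K$-type from \cite{BDW}. However, you describe the occurrence step as a ``Littlewood--Richardson positivity check,'' and this understates (in fact misses) the essential difficulty. The multiplicity of $V^K_\mu$ in ${\rm Ind}^K_{Q\cap K}(V^A_{\theta_A}\boxtimes V^{K'}_{\theta_{\mathcal O}})$ is computed by the Blattner-type formula \eqref{eq-blattner}, which is an \emph{alternating} sum over the Lie-algebra cohomology groups $H^i(\mathfrak n\cap\mathfrak k, V^K_\mu)$. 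Exhibiting a positive contribution at $i=0$ (the paper's Lemma~\ref{lem-1}, via LR coefficients) does not by itself yield positivity of the total: one must separately kill the higher-degree terms. The paper does this in Lemma~\ref{lem-2} by a delicate bookkeeping argument — every $M\cap K$-type in $S^m(\mathfrak n\cap\mathfrak k)$ has the form $V^A_{\theta_A+\gamma_1}\boxtimes V^{K'}_{\theta_{\mathcal O}+\gamma_2}$ with $|\gamma_1|\ge|\gamma_2|$, while every $M\cap K$-type in $H^i(\mathfrak n\cap\mathfrak k,V^K_\mu)$ for $i>0$ would have $|\delta_1|<|\delta_2|$, a contradiction. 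Without an argument of this sort (or an alternative cancellation-free multiplicity formula), your step (1) does not close; this is a genuine gap, not merely omitted routine detail.

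Two smaller points. First, for step (2) you say the sorted coordinates ``recombine the chain coordinates into $2\lambda$,'' which is the right picture, but you do not identify the nontrivial combinatorial input: the paper reduces to the partition-transpose identity \eqref{eq-pt}, $\{\ell-\mathbf p\} = \ell - \mathbf p^t$ for $\ell=\rho_m$ and $\mathbf p$ a staircase-shaped partition, proved by a skew Ferrers diagram argument. Something of this precision is needed, and it is not obvious that your ``peel off one chain at a time'' reduction avoids it; the paper handles all linked chains simultaneously precisely because the staircases $\nu_{i_1},\dots,\nu_{i_j}$ must be inserted into the zero-block of $\theta_{\mathcal O}$ in a coordinated way. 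Second, the LR positivity you invoke in step (1) is itself not immediate: the paper proves it in Lemma~\ref{lemma-LR} by explicitly constructing an L-R skew tableau of shape $(\mu_1;\dots;\mu_l)/(\theta_1;\dots;\theta_l)$ and weight $(\nu_{i_j},\dots,\nu_{i_1},0,\dots,0)$, and separately invokes the stable branching rules of \cite{HTW} for the $K'$-factor. You should at least flag that the positivity is not a formal consequence of the staircase shape but needs such a construction.
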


\begin{example}\label{exam-scattered}
Consider the scattered representation
$${\rm Ind}_{GL(4) \times GL(1) \times GL(2) \times SO(17)}^G(\mathbb{C}_{(15,15,15,15)} \otimes \mathbb{C}_{(8)} \otimes \mathbb{C}_{(5,5)} \otimes \pi_{[15,2]})$$
with Zhelobenko parameter:
$$\begin{aligned}
\{18 && && 16 && && 14 && && 12\} && &&  && && \{8\} && &&\{ 6 && && 4\} && && \\
&& && && && && \{13 && && 11 && && 9 && && 7 && && 5 && && 3 && && 2 && && 1\}_B
\end{aligned}.$$

By our ordering of $\mathcal{A}_i$, we label the chains corresponding to $GL(4)$, $GL(1)$ and $GL(2)$ by $\mathcal{A}_1$, $\mathcal{A}_2$ and $\mathcal{A}_3$, respectively.
By \cite{DW}, the spin lowest $K$-type of $GL$ part is equal to its lowest $K$-type, which is $V^A_{(\theta_1;\theta_2;\theta_3)} = V^A_{(15,15,15,15;8;5,5)}$, and the unipotent
representation $\pi_{[15,2]}$ has spin lowest $K$-type $V^{K'}_{(7,7,0,0,0,0,0,0)}$.
Then  $\mathcal{A}_{1}$, $\mathcal{A}_{2}$ and $\mathcal{A}_{3}$ are all linked to $\mathcal{X}_{\mathcal{O}}$, with
$\nu_{1} = (1)$ on the $GL(4)$ coordinates, $\nu_{2} = (3)$ on the $GL(1)$ coordinates, and $\nu_3 = (5,4)$ on the $GL(2)$ coordinates.
So $\mu = ({\bf 16},15,15,15;{\bf 11};{\bf 10, 9}; 7,7, {\bf 5,4,3,1},0,0)$.

\smallskip
Note that $\{\mu - \rho\} = (\frac{7}{2}, \frac{5}{2}, \frac{3}{2}, \frac{3}{2}, \frac{3}{2}, \frac{3}{2}, \frac{1}{2}, \frac{1}{2}, \frac{1}{2},\frac{1}{2},\frac{1}{2},\frac{1}{2},\frac{1}{2},\frac{1}{2}, \frac{1}{2})$ and hence
$$\{\mu - \rho\} + \rho = \{\mu - \rho\} + (\frac{29}{2}, \frac{19}{2}, \dots, \frac{3}{2}, \frac{1}{2}) = (18,16,14,13,12,11,9,8,7,6,5,4,3,2,1) = 2\lambda$$
satisfies \eqref{eq-par}.
\end{example}

\begin{example}
Let $G = Sp(6,\mathbb{C})$, then the six scattered representations in Example \ref{eg-lowrank} along with their
spin lowest $K$-types are given by

\begin{center}
\begin{tabular}{|c|c|c|c|}
\hline
{\rm Parameters }& {\rm Scattered Representations} & {\rm LKT} & {\rm Spin LKT}\tabularnewline
\hline
$\{5\ 3\ 1\}_{C}$ & $\pi_{[5,1]}$ & $(1,0,0)$ & $(3,0,0)$\tabularnewline
\hline
$\{6\ 4\ 2\}_{C}$ & $\pi_{[6]}$ & $(0,0,0)$ & $(0,0,0)$\tabularnewline
\hline
$\{1\}\cup\{4\ 2\}_{C}$ & ${\rm Ind}_{GL(1)\times Sp(4)}^{G}(\mathbb{\mathbb{C}}_{(1)}\otimes\pi_{[4]})$ & $(1,0,0)$ & $(3,2,0)$\tabularnewline
\hline
$\{2\}\cup\{3\ 1\}_{C}$ & ${\rm Ind}_{GL(1)\times Sp(4)}^{G}(\mathbb{\mathbb{C}}_{(2)}\otimes\pi_{[3,1]})$ & $(2,0,0)$ & $(3,2,1)$\tabularnewline
\hline
$\{3\}\cup\{4\ 2\}_{C}$ & ${\rm Ind}_{GL(1)\times Sp(4)}^{G}(\mathbb{\mathbb{C}}_{(3)}\otimes\pi_{[4]})$ & $(3,0,0)$ & $(4,1,0)$\tabularnewline
\hline
$\{3\ 1\}\cup\{2\}_{C}$ & ${\rm Ind}_{GL(2)\times Sp(2)}^{G}(\mathbb{\mathbb{C}}_{(2,2)}\otimes\pi_{[2]})$ & $(2,2,0)$ & $(3,2,1)$\tabularnewline
\hline
\end{tabular}
\end{center}
For instance, the scattered representation with parameter $\{2\}\cup\{3\ 1\}_{C}$ has
$\theta_1 = (2)$ and $\theta_{\mathcal{O}} = (2,0)$. Here $\nu_1 = (1)$, so
$\mu = (2+1;2,0+1) = (3,2,1)$
\end{example}

In order to prove Theorem \ref{thm-slkt}, it suffices to show that $V^K_{\mu}$ appears in $\pi$, and that $\mu$ satisfies \eqref{eq-par}. Indeed, by the main result of \cite{BDW}, then $V^K_{\mu}$ would be the unique spin-lowest $K$-type
in $\pi$ appearing with multiplicity one. The rest of the manuscript is devoted to proving these two results.

\begin{proposition} \label{prop-1}
Let $\pi$ be a scattered representation of the form given in \eqref{eq-indscattered}. Then
$[\pi|_K: V_{\mu}^K] > 0$.
\end{proposition}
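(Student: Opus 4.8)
The plan is to verify that $V^K_\mu$ occurs in $\pi = \mathrm{Ind}_L^G(\bigotimes_i \mathbb{C}_{\chi_i} \otimes \pi_{\mathcal{O}})$ by an explicit branching argument, using induction in stages together with the known $K$-type structure of the two pieces. First I would recall that the $K$-types of a (real) parabolically induced representation $\mathrm{Ind}_L^G(\sigma)$ are governed by Frobenius reciprocity: $[\mathrm{Ind}_L^G(\sigma)|_K : V^K_\mu] = [\sigma|_{K\cap L} : \mathrm{(restriction\ of\ }V^K_\mu)]$ suitably interpreted, and for complex classical groups the restriction $V^K_\mu|_{K'}$ (for $K' = $ the analogous compact group of $G'$) decomposes by the classical branching rules $GL(n)\downarrow GL(n-1)$, $SO$/$Sp$ branching, etc. So the task reduces to showing that the $GL$-type $V^A_{(\theta_1;\dots;\theta_l)} \boxtimes V^{K'}_{\theta_{\mathcal{O}}}$ of $\bigotimes_i \mathbb{C}_{\chi_i}\otimes\pi_{\mathcal{O}}$ appears in the restriction of $V^K_\mu$ along the Levi, where by \eqref{eq-pia} and the cited results of \cite{DW} and \cite{BP1} the first factor is itself the (spin) lowest $K$-type of $\pi_A$ and the second is the spin lowest $K$-type of $\pi_{\mathcal{O}}$.

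The key steps, in order, would be: (1) reduce via induction in stages to the two-step case $\mathrm{Ind}^G_{GL(a)\times G'}(\mathbb{C}_\chi \otimes \tau)$, treating the $GL$-factors one at a time in decreasing order of $T_i+t_i$ (the chosen ordering); (2) for a single linked chain $\mathcal{A}_i$, show that adding $\nu_i = (p, p-1, \dots)$ on the designated coordinates of $\theta_i$ and simultaneously recording $\nu_i$ inside $\mu_{\mathcal{O}}$ is exactly what the branching rule $V^K_\mu \downarrow GL(a_i)\times(\text{rest})$ permits — i.e. the interlacing inequalities coming from the classical branching rule are precisely the inequalities $X_1 \geq A_1 \geq \dots$ (resp. $A_1 \geq X_1 \geq \dots$) built into the definition of \emph{linked}; (3) for an unlinked chain, $\mu_i = \theta_i$ and the branching is trivial; (4) handle the base case — verify directly that $V^{K'}_{\theta_{\mathcal{O}}}$ occurs in $\pi_{\mathcal{O}}|_{K'}$, which is immediate since $\theta_{\mathcal{O}}$ is by definition (from \cite{BP1}, Sections 5.4–5.6) the spin lowest $K$-type of $\pi_{\mathcal{O}}$, hence certainly a $K'$-type of $\pi_{\mathcal{O}}$; (5) assemble: since $\mu_{\mathcal{O}}$ is built by stacking the $\nu_{i_j}, \dots, \nu_{i_1}$ after $\theta_{\mathcal{O},+}$, confirm that at each stage the partially-built weight remains dominant (guaranteed by the ordering $T_i + t_i \geq T_j + t_j$) and that the successive branchings are compatible, so that $V^K_\mu$ indeed survives all the way up.

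The main obstacle I expect is step (2) combined with step (5): one must show that the \emph{separate} branchings for the different linked chains $\mathcal{A}_{i_1}, \dots, \mathcal{A}_{i_j}$ can be performed \emph{simultaneously} — that is, when we peel off $GL(a_{i_1})$, then $GL(a_{i_2})$, etc., the growing block of $\nu$'s sitting inside $\mu_{\mathcal{O}}$ does not create an obstruction for a later branching step. Concretely, after adding $\nu_{i_1}$ on some $p_1$ repeated coordinates of value $(A^{(1)}_1 + A^{(1)}_q)/2$ inside $\theta_{\mathcal{O}}$, the next chain $\mathcal{A}_{i_2}$ must attach its $\nu_{i_2}$ on coordinates that, after branching, still interlace correctly with both $\theta_{\mathcal{O},+}$ and the already-placed $\nu_{i_1}$-block; this is where the hypothesis that the whole union of chains is \emph{interlaced} (Theorem \ref{thm-scattered}) is essential, since it forces the relative sizes $X_1 > A^{(1)}_1, A^{(2)}_1 > \dots$ to line up in a chain so that the branching inequalities are consistent. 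I would isolate this as a combinatorial lemma about nested interlacing patterns and the $GL(n)\downarrow GL(n-1)$ branching rule, and reduce the general case to it; the remaining pieces (dominance, the base case, the unlinked chains) are routine bookkeeping, essentially identical to the type $A$ treatment in \cite{DW}.
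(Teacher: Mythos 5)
The paper does not compute the multiplicity $[\pi|_K:V^K_\mu]$ by direct branching through the Levi $L=\prod_i GL(a_i)\times G'$. Instead, it first passes to the \emph{maximal} Levi $M=GL(\sum_i a_i)\times G'$: it uses the known $GL(n,\mathbb C)$ result to observe that $V^A_{(\theta_1;\dots;\theta_l)}\boxtimes V^{K'}_{\theta_{\mathcal O}}$ already occurs in $(\pi_A\boxtimes\pi_{\mathcal O})|_{M\cap K}$, so that $\pi|_K$ contains $\mathrm{Ind}^K_{Q\cap K}\bigl(V^A_{(\theta_1;\dots;\theta_l)}\boxtimes V^{K'}_{\theta_{\mathcal O}}\bigr)$. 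It then computes the multiplicity of $V^K_\mu$ in this sub-induction via the Kostant/Blattner alternating-sum formula \eqref{eq-blattner}, proving in Lemma \ref{lem-1} that the $H^0$ term contributes (via the explicit decomposition \eqref{eq-mfu} of $\mathfrak n\cap\mathfrak k$, a Littlewood--Richardson tableau construction in Lemma \ref{lemma-LR}, and the stable branching rules of \cite{HTW}), and in Lemma \ref{lem-2} that all the $H^i$, $i>0$, corrections vanish by a degree/positivity comparison. Your route is genuinely different: you propose to apply Frobenius reciprocity at the level of $L$ and to branch $V^K_\mu$ down through $K\supset U(\sum a_i)\times K'\supset \prod_i U(a_i)\times K'$ one factor at a time, checking interlacing inequalities at each step. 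That framework is in principle valid, but it never reaches the level of an intermediate $GL$-module $V^A_{(\theta_1;\dots;\theta_l)}$; you would instead have to branch $V^K_\mu$ against the stable $Sp$/$SO\downarrow GL\times$smaller rule, which already brings in Littlewood--Richardson coefficients, and then further branch the $GL$-part to the characters $\mathbb C_{\theta_i}$.

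The genuine gap is exactly the point you flag as the ``main obstacle'' and then wave away as ``routine bookkeeping.'' Showing that the successive branchings can be performed simultaneously --- that the $\nu$'s stacked into $\mu_{\mathcal O}$ interlace consistently with $\theta_{\mathcal O,+}$ and with each other at every stage, and that the end result lands on the single character $\bigotimes_i\mathbb C_{\theta_i}\boxtimes V^{K'}_{\theta_{\mathcal O}}$ --- is precisely where the paper invests its real combinatorial effort (the LR skew-tableau construction of Lemma \ref{lemma-LR}, plus the observation from Remark \ref{rmk-gamma} and Lemma \ref{lem-2} that no cancellation can ruin the count). Your proposal asserts that the interlacedness of the chains forces these inequalities to line up, but gives no argument, and the claim that the remainder is ``essentially identical to the type $A$ treatment'' is not accurate: the type $A$ case avoids the stable $Sp/SO\downarrow GL$ branching entirely. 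Moreover, even the ``immediate'' base case deserves care: you must show the restriction of $V^K_\mu$ produces $V^{K'}_{\theta_{\mathcal O}}$ in the $K'$ slot specifically --- not merely that $\theta_{\mathcal O}$ is \emph{a} $K'$-type of $\pi_{\mathcal O}$ --- and that is a non-trivial consequence of the construction of $\mu$, not a tautology. Without the isolated combinatorial lemma being actually proved, the proposal records a plausible strategy but not a proof.
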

\begin{proof}
Note that we have an inclusion of $M \cap K$-types
$$V_{(\theta_1;\dots;\theta_l)}^A \boxtimes V_{\theta_{\mathcal{O}}}^{K'} \subseteq (\pi_A \boxtimes \pi_{\mathcal{O}})|_{M \cap K},$$
where 
$$M = GL(\sum_i a_i) \times G'$$ 
is a maximal Levi subgroup of $G$ containing the maximal compact torus $T \leq K$, and $\pi_A$ is as defined in \eqref{eq-pia}. Therefore
we have the inclusion of $K$-types
$${\rm Ind}_{Q \cap K}^K(V_{(\theta_1;\dots;\theta_l)}^A \boxtimes V_{\theta_{\mathcal{O}}}^{K'}) \subseteq {\rm Ind}_{Q \cap K}^K((\pi_A \boxtimes \pi_{\mathcal{O}})|_{M \cap K}) \cong \pi|_K,$$
where $Q = MN$ can be chosen as the parabolic subgroup such that the roots of $\mathfrak{n}$
are all contained in $\Delta^+_G$. So to prove the proposition, it suffices to check that
$$
[{\rm Ind}_{Q \cap K}^K(V_{(\theta_1;\dots;\theta_l)}^A \boxtimes V_{\theta_{\mathcal{O}}}^{K'}):V^K_{\mu}] > 0.
$$
This follows immediately
from \eqref{eq-blattner}, Lemmas \ref{lem-1} and \ref{lem-2}.
\end{proof}

We put
$$
W^{\prime}=\{w \in W\mid\ \langle w\rho, \alpha^{\vee} \rangle > 0, \ \forall \alpha \in \Delta_M^+\}.
$$
For induced representations, we have a Blattner-type formula
\begin{equation} \label{eq-blattner}
[{\rm Ind}_{Q \cap K}^K(V_{(\theta_1;\dots;\theta_l)}^A \boxtimes V_{\theta_{\mathcal{O}}}^{K'}):  V^K_{\mu}] = \sum_{m \in \mathbb{N}}\sum_i (-1)^i [(V_{(\theta_1;\dots;\theta_l)}^A \boxtimes V_{\theta_{\mathcal{O}}}^{K'}) \otimes S^m(\mathfrak{n} \cap \mathfrak{k}): H^i(\mathfrak{n} \cap \mathfrak{k},V^K_{\mu})],
\end{equation}
where
\begin{equation} \label{eq-kostant}
H^i(\mathfrak{n} \cap \mathfrak{k},V^K_{\mu}) = \bigoplus_{\{w \in W^\prime \mid l(w)=i\}} V^{M\cap K}_{w(\mu + \rho) - \rho}.
\end{equation}
The Lie algebra cohomology formula \eqref{eq-kostant} is due to Kostant \cite{K}.
The formula  \eqref{eq-blattner} can be seen by looking at the Weyl character formula of the restricted representation $V^K_{\mu}|_{M\cap K}$ (c.f. \cite{B}, Section 2.7). An analogous formula in the setting of cohomological induction is given in Theorem 5.64 of \cite{KnV}. We note that for complex Lie groups, parabolic induction and cohomological induction are essentially the same.

We now study right hand side of \eqref{eq-blattner} for all $i \geq 0$.
\begin{lemma} \label{lem-1}
There exists a non-negative integer $k$ such that
$$[(V_{(\theta_1;\dots;\theta_l)}^A \boxtimes V_{\theta_{\mathcal{O}}}^{K'}) \otimes S^k(\mathfrak{n} \cap \mathfrak{k}): H^0(\mathfrak{n} \cap \mathfrak{k},V^K_{\mu})] > 0$$
\end{lemma}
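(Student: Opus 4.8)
\emph{Proof proposal.} The plan is to squeeze the positivity out of the degree-zero piece of \eqref{eq-blattner}. By Kostant's formula \eqref{eq-kostant} the $i=0$ contribution uses $H^0(\mathfrak{n}\cap\mathfrak{k},V^K_\mu)=V^{M\cap K}_\mu$, so it suffices to produce one $k$ with
$$[(V^A_{(\theta_1;\dots;\theta_l)}\boxtimes V^{K'}_{\theta_{\mathcal{O}}})\otimes S^k(\mathfrak{n}\cap\mathfrak{k}):V^{M\cap K}_\mu]>0 .$$
Since $M\cap K\cong U(m)\times K'$ with $m=\sum_i a_i$, the module $V^{M\cap K}_\mu$ factors as $V^A_{(\mu_1;\dots;\mu_l)}\boxtimes V^{K'}_{\mu_{\mathcal{O}}}$; and because $\mathfrak{k}\cong\mathfrak{g}$ and $Q=MN$ corresponds to the parabolic of $\mathfrak{g}$ with Levi $\mathfrak{m}$, the space $\mathfrak{n}\cap\mathfrak{k}$ is, as an $M\cap K$-module, its nilradical $(\mathbb{C}^m\otimes V')\oplus E$, where $V'$ is the defining representation of $K'$ and $E=\Lambda^2\mathbb{C}^m$ in types $B$ and $D$ and $E=S^2\mathbb{C}^m$ in type $C$.

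Next I would expand $S^k$ and apply the Cauchy identity $S^j(\mathbb{C}^m\otimes V')=\bigoplus_{\lambda\vdash j}S_\lambda(\mathbb{C}^m)\otimes S_\lambda(V')$, together with the usual Schur-functor decomposition of $S(E)$, to write the multiplicity above as a sum, over partitions $\lambda$ and the relevant even-type partitions $\xi$ coming from $S(E)$, of products of a $GL(m)$-multiplicity $[V^A_{(\theta_1;\dots;\theta_l)}\otimes S_\lambda(\mathbb{C}^m)\otimes S_\xi(\mathbb{C}^m):V^A_{(\mu_1;\dots;\mu_l)}]$ and a $K'$-multiplicity $[V^{K'}_{\theta_{\mathcal{O}}}\otimes S_\lambda(V'):V^{K'}_{\mu_{\mathcal{O}}}]$. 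The decisive bookkeeping, read off from Theorem \ref{thm-slkt}, is that $\mu_{\mathcal{O}}$ is $\theta_{\mathcal{O}}$ with the concatenated staircases $\delta:=(\nu_{i_j};\dots;\nu_{i_1})$ inserted in the block of zero coordinates, so that $\mu_{\mathcal{O}}=\theta_{\mathcal{O}}\sqcup\delta$ as partitions, while each $\mu_i-\theta_i$ is exactly $\nu_i$ (and $\mu_i=\theta_i$ for the $\mathcal{A}_i$ not linked to $\mathcal{X}_{\mathcal{O}}$); hence
$$|\mu_{\mathcal{O}}|-|\theta_{\mathcal{O}}|=|\delta|=\sum_i\bigl(|\mu_i|-|\theta_i|\bigr)=|(\mu_1;\dots;\mu_l)|-|(\theta_1;\dots;\theta_l)| .$$
Because $GL(m)$-tensor products are degree-additive and, in the relevant stable range, $K'$-tensor products can only drop total degree by the even amounts caused by contractions with the invariant form, this common value forces the unique admissible choice $\lambda=\delta$, $\xi=\emptyset$, $k=|\delta|$.

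With this choice the two surviving multiplicities become Littlewood--Richardson coefficients. On the $K'$-side, Littlewood's restriction rule gives $V^{K'}_\delta\subseteq S_\delta(V')$ (via $c^\delta_{\delta,\emptyset}=1$), so the multiplicity is at least the Newell--Littlewood coefficient $N(\theta_{\mathcal{O}},\delta,\mu_{\mathcal{O}})$, which, since $|\mu_{\mathcal{O}}|=|\theta_{\mathcal{O}}|+|\delta|$, equals $c^{\mu_{\mathcal{O}}}_{\theta_{\mathcal{O}},\delta}$; using the explicit shape \eqref{eq-thetao} of $\theta_{\mathcal{O}}$ one sees that $\mu_{\mathcal{O}}/\theta_{\mathcal{O}}$ is a translate of the Young diagram of $\delta$, so this coefficient is $1$. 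On the $GL(m)$-side one is left to show
$$[V^A_{(\theta_1;\dots;\theta_l)}\otimes S_\delta(\mathbb{C}^m):V^A_{(\mu_1;\dots;\mu_l)}]=c^{(\mu_1;\dots;\mu_l)}_{(\theta_1;\dots;\theta_l),\delta}>0 ,$$
i.e. that the skew diagram $(\mu_1;\dots;\mu_l)/(\theta_1;\dots;\theta_l)$ — a disjoint union of the staircases $\nu_i$ placed at the levels $(A_1+A_q)/2$ dictated by the linkings — admits a Littlewood--Richardson filling of content $\delta$. I would construct such a filling explicitly and uniformly in types $B$, $C$, $D$, building it from the largest staircase downward; the arithmetic-progression (step two) form of the chains $\mathcal{A}_i$ and the definition of linked chains from \cite{DW} are precisely what keeps the column-strictness and lattice-word conditions intact. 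This last combinatorial construction is the step I expect to be the main obstacle, together with the routine check that the ranks of $K'$ occurring here are large enough for the naive Littlewood and Newell--Littlewood rules to hold (the few small-rank cases being handled directly from \eqref{eq-thetao}, with the usual care for the two half-spin families in type $D$). Once both factors are positive for $(\lambda,\xi)=(\delta,\emptyset)$, the lemma follows with $k=|\delta|$.
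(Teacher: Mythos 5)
Your strategy tracks the paper's proof of this lemma closely: isolate the $i=0$ term of the Blattner formula, use the $M\cap K$-module structure of $\mathfrak{n}\cap\mathfrak{k}$ (your $(\mathbb{C}^m\otimes V')\oplus E$ is exactly \eqref{eq-mfu}), apply Cauchy to locate a copy of $V^A_\delta\boxtimes V^{K'}_\delta$ inside $S^{|\delta|}(\mathfrak{n}\cap\mathfrak{k})$, and reduce to two tensor-product multiplicities, one in $GL(m)$ and one in $K'$; the paper handles the $K'$ side by citing the stable branching rules of Section~2.1 of \cite{HTW}, which is the same content as your Littlewood/Newell--Littlewood appeal. (A minor overreach: your degree bookkeeping does force $|\lambda|=|\delta|$, $\xi=\emptyset$, $k=|\delta|$, but it does not single out $\lambda=\delta$ among partitions of that size; that does not matter for an existence proof, since you only need one contributing $\lambda$ and you choose $\lambda=\delta$.)

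The genuine gap is precisely the step you flag as ``the main obstacle'': you assert that the skew shape $(\mu_1;\dots;\mu_l)/(\theta_1;\dots;\theta_l)$ admits an LR filling of content $\delta=(\nu_{i_j};\dots;\nu_{i_1})$, but you do not produce one. This is exactly the content of Lemma~\ref{lemma-LR}, which the paper's proof of Lemma~\ref{lem-1} invokes, and the paper establishes it by an explicit construction: the rows of the skew diagram read top to bottom have sizes $(\nu_{i_1},\dots,\nu_{i_j})$, and one fills the block of size $\nu_{i_t}$ by reading off the columns of $T_t/T_{t-1}$, where $T_1\subset\cdots\subset T_j=T$ is a nested sequence of sub-tableaux inside the canonical filling $T$ of $(\nu_{i_j};\dots;\nu_{i_1})$ (row $k$ filled with $k$'s), with $T_t$ of shape $(\nu_{i_1},\dots,\nu_{i_t})$; one then checks semistandardness and the reverse-lattice-word condition. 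Until you carry out this construction (or prove the LR-positivity \eqref{eq-branching-typeA} by some other means), the lemma is not yet established.
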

\begin{proof}
By \eqref{eq-kostant}, it is obvious that $H^0(\mathfrak{n} \cap \mathfrak{k},V^K_{\mu}) = V^{M \cap K}_{\mu}$. So it suffices to show
$$[(V_{(\theta_1;\dots;\theta_l)}^A \boxtimes V_{\theta_{\mathcal{O}}}^{K'}) \otimes S(\mathfrak{n} \cap \mathfrak{k}): V^{M \cap K}_{\mu}] >0.$$
By our choice of the parabolic subgroup $Q=MN$ in Proposition \ref{prop-1}, we have the following decomposition of $\mathfrak{n} \cap \mathfrak{k}$ as $M \cap K$-modules:
\begin{equation} \label{eq-mfu}
\begin{aligned}
\text{Type B, D:}&\quad \mathfrak{n} \cap \mathfrak{k} = \left(V^A_{(1,0,\dots,0)}  \boxtimes V^{K'}_{(1, 0,\dots,0)}\right) \oplus \left(V^A_{(1,1,0,\dots,0)} \boxtimes V^{K'}_{(0,\dots,0)}\right)\\
\text{Type C:}&\quad \mathfrak{n} \cap \mathfrak{k} = \left(V^A_{(1,0,\dots,0)} \boxtimes V^{K'}_{(1,0,\dots,0)}\right) \oplus \left(V^A_{(2,0,\dots,0)} \boxtimes V^{K'}_{(0,\dots,0)}\right) 
\end{aligned}
\end{equation}
Thus $S^k(\mathfrak{n} \cap \mathfrak{k})$ contains a copy of $S^k(V^A_{(1,0,\dots,0)} \boxtimes V^{K'}_{(1,0,\dots,0)}) = S^k(\mathbb{C}^{\sum_i a_i} \boxtimes \mathbb{C}^{x})$ for each classical type with $x =2\cdot \mathrm{rank}(G')$ for Type C, D, and $x =2\cdot \mathrm{rank}(G') +1$ for Type B.
Note that the $GL(\sum_i a_i) \times GL(x)$-module $S^k(\mathbb{C}^{\sum_i a_i}  \boxtimes \mathbb{C}^{x})$
contains a copy of
$$V^A_{(k_1,\dots,k_j,0,\dots,0)} \boxtimes V^{U(x)}_{(k_1,\dots,k_j,0,\dots,0)}, \quad \text{where}\ k_1 \geq \dots \geq k_j \geq 0\ \text{and}\ k = \sum_i k_i.$$
with multiplicity one. Suppose $j \leq \min \{ \sum_i a_i, \mathrm{rank}(G')\}$. By restricting the $U(x)$-module $V^{U(x)}_{(k_1,\dots,k_j,0,\dots,0)}$ to ${K'}$, it must have a copy of
$V^{K'}_{(k_1,\dots,k_j,0,\dots,0)}$ with multiplicity one.

For any $v \in \mathbb{R}^n$, let $|v|$ be the sum of the coordinates of $v$. By taking $k = |\nu_{i_j}| + \dots + |\nu_{i_1}|$ with each $\nu_{i_t}$ being given by Theorem \ref{thm-slkt}, we conclude that $S^k(\mathfrak{n} \cap \mathfrak{k})$ contains a copy of
$V^A_{(\nu_{i_j},\dots,\nu_{i_1},0,\dots,0)} \boxtimes V^{K'}_{(\nu_{i_j},\dots,\nu_{i_1},0,\dots,0)}.$

On the $GL(\sum_i a_i)$ factor, we will show in Lemma \ref{lemma-LR} that
\begin{equation}\label{eq-branching-typeA}
[V_{(\theta_1;\dots;\theta_l)}^A  \otimes V^A_{(\nu_{i_j},\dots,\nu_{i_1},0,\dots,0)}: V^{A}_{(\mu_1;\dots;\mu_l)}] \geq 1.
\end{equation}
On the $G'$ factor, by using  Section 2.1 of \cite{HTW}, we have that
\begin{equation}\label{eq-branching-Gprime}
[V_{\theta_{\mathcal{O}}}^{K'} \otimes V^{K'}_{(\nu_{i_j},\dots,\nu_{i_1},0,\dots,0)}: V_{\mu_{\mathcal{O}}}^{K'}]\geq 1.
\end{equation}
Indeed,  the number of non-zero entries of  $\mu_{\mathcal{O}} = (\theta_{\mathcal{O}, +}; \nu_{i_j};\dots;$ $\nu_{i_1}; 0, \dots, 0)$ is upper bounded by $\mbox{rank}(G')$. Thus Sections 2.1.2 and 2.1.3 of \cite{HTW} apply, and give the following lower bound for the left hand side of \eqref{eq-branching-Gprime}:
$$
c_{\theta_{\mathcal{O}, \, (\nu_{i_j};\dots; \nu_{i_1}; 0, \dots, 0)}}^{\mu_{\mathcal{O}}}.
$$
One sees that this Littlewood-Richardson coefficient equals one, and \eqref{eq-branching-Gprime} follows.

To summarize, we have
\begin{align*}
&[(V_{(\theta_1;\dots;\theta_l)}^A \boxtimes V_{\theta_{\mathcal{O}}}^{K'}) \otimes S^k(\mathfrak{n} \cap \mathfrak{k}): H^0(\mathfrak{n} \cap \mathfrak{k},V^K_{\mu})] \\
\geq &[(V_{(\theta_1;\dots;\theta_l)}^A \boxtimes V_{\theta_{\mathcal{O}}}^{K'}) \otimes(V^A_{(\nu_{i_j},\dots,\nu_{i_1},0,\dots,0)} \boxtimes V^{K'}_{(\nu_{i_j},\dots,\nu_{i_1},0,\dots,0)}): V^{M \cap K}_{\mu}] \\
 \geq &[V_{(\mu_1;\dots;\mu_l)}^A \boxtimes (V_{\theta_{\mathcal{O}}}^{K'} \otimes V^{K'}_{(\nu_{i_j},\dots,\nu_{i_1},0,\dots,0)}): V^{M \cap K}_{\mu}] \\
 \geq &[V_{(\mu_1;\dots;\mu_l)}^A \boxtimes V_{\mu_{\mathcal{O}}}^{K'}: V^{M \cap K}_{\mu}] =  1,
\end{align*}
where the second inequality uses \eqref{eq-branching-typeA}, while the third inequality uses \eqref{eq-branching-Gprime}. Hence the result follows.
\end{proof}

\begin{remark} \label{rmk-gamma}
From \eqref{eq-mfu} and the proof of the above lemma, one can also check that any $V^A_{\gamma_1} \boxtimes V^{K'}_{\gamma_2}$ appearing
in $S^m(\mathfrak{n} \cap \mathfrak{k})$ must have $|\gamma_1| \geq |\gamma_2|$ for all $m \geq 0$.
\end{remark}

\begin{lemma}\label{lemma-LR}
In the setting of Lemma \ref{lem-1}, the inequality \eqref{eq-branching-typeA} holds.
\end{lemma}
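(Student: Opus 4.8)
The statement \eqref{eq-branching-typeA} asks us to decompose a tensor product of $GL$-representations. Recall that $V_{(\theta_1;\dots;\theta_l)}^A$ is the lowest $K$-type of $\pi_A$, which (per Section~2 of \cite{DW}) is itself an outer tensor product $V^A_{\theta_1} \boxtimes \cdots \boxtimes V^A_{\theta_l}$ built from the chain data $\mathcal{A}_i = \mathcal{A}_{T_i,t_i}$, and each $\theta_i$ is a constant vector $\left( (A_1^{(i)}+A_{q_i}^{(i)})/2, \dots \right)$ of length $a_i$. The vector $(\nu_{i_j},\dots,\nu_{i_1},0,\dots,0)$ that we tensor with is, by Theorem~\ref{thm-slkt}(iii), a concatenation of the staircase pieces $\nu_{i_t}=(p,p-1,\dots)$ sitting in the leading coordinates. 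The plan is to recognize this as a Littlewood--Richardson problem: I would write $V^{A}_{(\mu_1;\dots;\mu_l)}$ against $V_{(\theta_1;\dots;\theta_l)}^A \otimes V^A_{(\nu_{i_j},\dots,\nu_{i_1},0,\dots,0)}$ and show that the relevant LR coefficient is at least one by exhibiting an explicit Littlewood--Richardson tableau.

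First I would reduce to the case $l=1$ with $\mathcal{A}_1$ linked to $\mathcal{X}_{\mathcal{O}}$, since the chains $\mathcal{A}_i$ not linked to $\mathcal{X}_{\mathcal{O}}$ contribute $\mu_i = \theta_i$ and the corresponding staircase piece is absent, so those factors pass through untouched; and since the ordering $T_i + t_i \geq T_j + t_j$ was arranged precisely so that the concatenation $(\mu_1;\dots;\mu_l)$ is dominant, the general case follows by grouping. Then, for a single linked chain, I would split into the two linking patterns of Theorem~\ref{thm-slkt}(ii). In the first pattern $\theta_1$ is the constant vector of length $a_1 = q$ with value $c:=(A_1+A_q)/2$, and $\mu_1$ is obtained by adding $\nu_1 = (p,p-1,\dots,1)$ to $p$ of those $q$ coordinates (here necessarily $p \leq q$); in the second pattern one adds $\nu_1 = (p,p-1,\dots,p-q+1)$ to all $q$ coordinates. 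In either case $\mu_1 - \theta_1$ is exactly the partition $\nu_1$ (padded with zeros in the first pattern), so the claim becomes: the LR coefficient $c^{\mu_1}_{\theta_1,\,\nu_1}$ (equivalently $[V^A_{\theta_1}\otimes V^A_{\nu_1}:V^A_{\mu_1}]$) is positive. Since $\theta_1 = (c^{q})$ is a rectangle, this is a classical fact — tensoring a $GL_{a_1}$-irreducible by the irreducible with highest weight equal to a rectangle $(c^q)$ just shifts every coordinate, and more generally adding a staircase to a rectangle always yields a valid LR filling. I would verify this by the explicit semistandard skew tableau: fill the skew shape $\mu_1/\theta_1$ with content $\nu_1$ by putting $i$'s in the row where the staircase step of height $i$ sits; the column-strictness and row-weakness, together with the lattice-word (Yamanouchi) condition, are immediate because $\nu_1$ is a strictly decreasing staircase.

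The main obstacle I anticipate is bookkeeping rather than mathematics: one must check that in the first linking pattern the staircase $\nu_i=(p,p-1,\dots,1)$ really does fit inside the $q$ rows of $\theta_i$ (i.e. $p \leq q$, which should follow from the combinatorics of "linked" — the overlap length $p$ of $\mathcal{X}_{\mathcal{O}}$ with $\mathcal{A}_i$ cannot exceed $q = |\mathcal{A}_i|$), and that the positions on which $\nu_i$ is added are compatible with dominance of $\mu_i$ as asserted in Theorem~\ref{thm-slkt}. I would also need to confirm that across several linked chains the padded staircases $(\nu_{i_j},\dots,\nu_{i_1},0,\dots)$ assemble into a single dominant weight so that the outer product $V^A_{\mu_1}\boxtimes\cdots\boxtimes V^A_{\mu_l}$ restricts correctly — but this is exactly what the ordering convention $T_i+t_i \geq T_j+t_j$ was set up to guarantee, so it reduces to citing that convention together with the explicit form of $\theta_i$ from \cite{DW}. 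Once these fits are confirmed, positivity of each local LR coefficient gives \eqref{eq-branching-typeA}.
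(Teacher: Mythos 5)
Your starting point — interpret \eqref{eq-branching-typeA} as a Littlewood--Richardson coefficient $c^{(\mu_1;\dots;\mu_l)}_{(\theta_1;\dots;\theta_l),\,\nu}$ with $\nu = (\nu_{i_j},\dots,\nu_{i_1},0,\dots,0)$, and exhibit one L-R skew tableau — is the same as the paper's. Your handling of the single-chain case ($l=1$, one linked $\mathcal{A}_1$, $\theta_1$ a rectangle, $\nu_1$ a staircase) is also fine and agrees with what the paper does in that subcase.

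The gap is the reduction "the general case follows by grouping," i.e.\ the claim that positivity of each local coefficient $c^{\mu_i}_{\theta_i,\tilde\nu_i}$ implies positivity of the global coefficient $c^{(\mu_1;\dots;\mu_l)}_{(\theta_1;\dots;\theta_l),\,\nu}$. This step is not justified, and it hides exactly the combinatorial difficulty the lemma is about. Concretely: the skew shape $(\mu_1;\dots;\mu_l)/(\theta_1;\dots;\theta_l)$ has row sizes $(\nu_{i_1},\nu_{i_2},\dots,\nu_{i_j})$ from top to bottom (interspersed with zero rows), because the chains are ordered by $T_i+t_i$ so that $\mathcal{A}_{i_1}$ — the one with the \emph{smallest} overlap, hence the shortest staircase — contributes near the top of $(\mu_1;\dots;\mu_l)$. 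But the content partition $\nu=(\nu_{i_j},\dots,\nu_{i_1},0,\dots,0)$ lists the staircases in the \emph{opposite} order, with the longest piece $\nu_{i_j}$ on top. A block-by-block filling (placing a local staircase of content $\tilde\nu_i$ in the skew block $\mu_i/\theta_i$) produces a tableau whose weight is the rearrangement $(\tilde\nu_1;\dots;\tilde\nu_l)$, which is not a partition; it is not the required weight $\nu$. More abstractly, $V^A_{\nu}$ is a single $GL(\sum_i a_i)$-representation, not an outer product over the $GL(a_i)$ factors, so one cannot simply tensor locally and conclude. The natural attempt to rescue this via iterated LR and induction in stages (using $c^{\mu_i}_{\theta_i,\tilde\nu_i}\geq 1$ and $c^{(\mu_1;\dots;\mu_l)}_{\mu_1,\dots,\mu_l}\geq 1$) shows that $V^A_{(\mu_1;\dots;\mu_l)}$ occurs in $V^A_{(\theta_1;\dots;\theta_l)}\otimes V^A_{\sigma}$ for \emph{some} constituent $\sigma$ of $\bigotimes_i V^A_{\tilde\nu_i}$, but it does not pin down $\sigma=\nu$.

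The paper's proof resolves this by constructing a single global filling: it builds a reference tableau $T$ of shape $(\nu_{i_j},\dots,\nu_{i_1})$ with all $k$'s in row $k$, takes a chain of sub-tableaux $T_1\subset\cdots\subset T_j=T$ whose shapes match the cumulative row sizes of the skew diagram, and uses the columns of $T_t/T_{t-1}$ to fill the rows of the $t$-th skew block. This is precisely the device that transports the weight from the "long-first" order of $\nu$ to the "short-first" order in which the rows of the skew shape actually appear, while keeping semistandardness and the reverse-lattice-word condition. Without an argument of this kind, the passage from local to global LR positivity in your proposal does not go through.
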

\begin{proof}
By the Littlewood-Richardson Rule as stated on page 420 of \cite{GW},
it suffices to find \emph{one} \emph{L-R skew tableau} of shape $(\mu_1;\dots;\mu_l)/(\theta_1;\dots;\theta_l)$ and weight $(\nu_{i_j},\dots,\nu_{i_1},0,\dots,0)$ in the sense of Definition 9.3.17 of \cite{GW}.

Construct the Ferrers diagram $(\mu_1;\dots;\mu_l)/(\theta_1;\dots;\theta_l)$.
Counting from top to bottom, its row sizes are equal to $(\nu_{i_1}, \nu_{i_2}, \dots, \nu_{i_j})$.

We now fill each partition $\nu_{i_t}$ in $(\mu_1;\dots;\mu_l)/(\theta_1;\dots;\theta_l)$ for each $1 \leq t \leq j$
as follows: Let $T$ be the semi-standard Young tableau whose shape is given by the partition
$(\nu_{i_j},\dots,\nu_{i_2},\nu_{i_1})$, and the entries of the $k$-th row of $T$
are all equal to $k$. Take a sequence of sub-tableaux of $T$
$$T_1 \subset T_2 \subset \dots \subset T_j = T$$
such that $T_t$ has the same shape as $(\nu_{i_1}, \nu_{i_2}, \dots, \nu_{i_t})$ for all $1 \leq t \leq j$.
We now look at the skew-tableau $T_t/T_{t-1}$ (where $T_{0}$
is the empty tableau). By construction, the column sizes of
$T_t/T_{t-1}$ are the same as those of $\nu_{i_t}$.

Fill the $k$-th row of the partition $\nu_{i_t}$ in $(\mu_1;\dots;\mu_l)/(\theta_1;\dots;\theta_l)$
by the $k$-th entries on the columns of $T_t/T_{t-1}$ counting from the top in ascending
order. Due to the construction in Theorem  \ref{thm-slkt}, this will give us a \emph{semi-standard skew tableau} of shape $(\mu_1;\dots;\mu_l)/(\theta_1;\dots;\theta_l)$ and weight $(\nu_{i_j}, \dots, \nu_{i_1}, 0, \dots, 0)$ (see Definition 9.3.16 of \cite{GW}), which is a \emph{reverse lattice word} by Definition 9.3.17 of \cite{GW}. Therefore, it is a desired L-R skew tableaux and the proof is complete. \end{proof}
\begin{example}\label{exam-LR}
Let us come back to Example \ref{exam-scattered}, where $l=3$ and
$$
(\mu_1;\mu_2;\mu_3)=({\bf 16}, 15,15,15; {\bf 11}; {\bf 10,9}), \quad (\theta_1;\theta_2;\theta_3)=(15,15,15,15;8;5,5).
$$
Recall that $\nu_{i_1}=(1)$, $\nu_{i_2}=(3)$, $\nu_{i_3} = (5,4)$. So the skew Ferrers diagram
$(\mu_1;\mu_2;\mu_3)/(\theta_1;\theta_2;\theta_3)$ looks like:
$$
\begin{tabular}{|c|c|c|c|c|c|c|c|c|c|c|c|c|c|c|c|c|c|}
\hline
$\bullet$ & $\bullet$ & $\bullet$ & $\bullet$ & $\bullet$ & $\bullet$ & $\bullet$ & $\bullet$ &  $\bullet$ & $\bullet$ & $\bullet$ & $\bullet$ & $\bullet$ & $\bullet$ & $\bullet$ &  $\nu_{i_1}$ \tabularnewline
\cline{1-16}
$\bullet$ &$\bullet$ &$\bullet$ &$\bullet$ &$\bullet$ & $\bullet$ & $\bullet$ &  $\bullet$ & $\bullet$ & $\bullet$ & $\bullet$ & $\bullet$ & $\bullet$ & $\bullet$ & $\bullet$ \tabularnewline
\cline{1-15}
$\bullet$ &$\bullet$ &$\bullet$ &$\bullet$ &$\bullet$ & $\bullet$ & $\bullet$ &  $\bullet$ & $\bullet$ & $\bullet$ & $\bullet$ & $\bullet$ & $\bullet$ & $\bullet$ & $\bullet$ \tabularnewline
\cline{1-15}
$\bullet$ &$\bullet$ &$\bullet$ &$\bullet$ &$\bullet$ & $\bullet$ & $\bullet$ &  $\bullet$ & $\bullet$ & $\bullet$ & $\bullet$ & $\bullet$ & $\bullet$ & $\bullet$ & $\bullet$ \tabularnewline
\cline{1-15}
$\bullet$ & $\bullet$ & $\bullet$ & $\bullet$ &  $\bullet$ & $\bullet$ & $\bullet$ & $\bullet$ & $\nu_{i_2}$ & $\nu_{i_2}$ & $\nu_{i_2}$ \tabularnewline
\cline{1-11}
$\bullet$ & $\bullet$ &  $\bullet$ & $\bullet$ & $\bullet$ & $\nu_{i_3}$ & $\nu_{i_3}$ &$\nu_{i_3}$ & $\nu_{i_3}$ & $\nu_{i_3}$ \tabularnewline
\cline{1-10}
$\bullet$ & $\bullet$ &  $\bullet$ & $\bullet$ & $\bullet$ & $\nu_{i_3}$ & $\nu_{i_3}$ &$\nu_{i_3}$ & $\nu_{i_3}$  \tabularnewline
\cline{1-9}
\end{tabular}
$$
To fill in the entries of the above diagram, consider
$$T_1 = \begin{tabular}{|c|c}
\cline{1-1}
\bf{1} & \tabularnewline
\cline{1-1}
\multicolumn{1}{c}{} & \tabularnewline
\end{tabular} \subset \quad
T_2 = \begin{tabular}{|c|c|c|c|c|}
\hline
$1$ & ${\bf 1}$ & ${\bf 1}$   \tabularnewline
\cline{1-3}
${\bf 2}$    \tabularnewline
\cline{1-1}
\end{tabular} \quad
\subset \quad
\begin{tabular}{|c|c|c|c|c|}
\hline
$1$ & $1$ & $1$ & ${\bf 1}$ & ${\bf 1}$  \tabularnewline
\cline{1-5}
$2$ & ${\bf 2}$ & ${\bf 2}$ & ${\bf 2}$   \tabularnewline
\cline{1-4}
${\bf 3}$ & ${\bf 3}$ &  ${\bf 3}$  \tabularnewline
\cline{1-3}
${\bf 4}$  \tabularnewline
\cline{1-1}
\end{tabular} = T_3 = T,$$
where the highlighted blocks are $T_t/T_{t-1}$ for $t = 1,2$. This leads us to the following tableau:
$$
\begin{tabular}{|c|c|c|c|c|c|c|c|c|c|c|c|c|c|c|c|c|c|}
\hline
$\bullet$ & $\bullet$ & $\bullet$ & $\bullet$ & $\bullet$ & $\bullet$ & $\bullet$ & $\bullet$ &  $\bullet$ & $\bullet$ & $\bullet$ & $\bullet$ & $\bullet$ & $\bullet$ & $\bullet$ &  $1$ \tabularnewline
\cline{1-16}
$\bullet$ &$\bullet$ &$\bullet$ &$\bullet$ &$\bullet$ & $\bullet$ & $\bullet$ &  $\bullet$ & $\bullet$ & $\bullet$ & $\bullet$ & $\bullet$ & $\bullet$ & $\bullet$ & $\bullet$ \tabularnewline
\cline{1-15}
$\bullet$ &$\bullet$ &$\bullet$ &$\bullet$ &$\bullet$ & $\bullet$ & $\bullet$ &  $\bullet$ & $\bullet$ & $\bullet$ & $\bullet$ & $\bullet$ & $\bullet$ & $\bullet$ & $\bullet$ \tabularnewline
\cline{1-15}
$\bullet$ &$\bullet$ &$\bullet$ &$\bullet$ &$\bullet$ & $\bullet$ & $\bullet$ &  $\bullet$ & $\bullet$ & $\bullet$ & $\bullet$ & $\bullet$ & $\bullet$ & $\bullet$ & $\bullet$ \tabularnewline
\cline{1-15}
$\bullet$ & $\bullet$ & $\bullet$ & $\bullet$ &  $\bullet$ & $\bullet$ & $\bullet$ & $\bullet$ & $1$ & $1$ & $2$ \tabularnewline
\cline{1-11}
$\bullet$ & $\bullet$ &  $\bullet$ & $\bullet$ & $\bullet$ & $1$ & $1$ &$2$ & $2$ & $3$ \tabularnewline
\cline{1-10}
$\bullet$ & $\bullet$ &  $\bullet$ & $\bullet$ & $\bullet$ & $2$ & $3$ &$3$ & $4$  \tabularnewline
\cline{1-9}
\end{tabular}
$$
Note that the row word of $T$ is $2334112231121$, which is a reverse lattice word.  Thus $T$ is an L-R skew tableau of shape $(\mu_1;\mu_2;\mu_3)/(\theta_1;\theta_2;\theta_3)$ and weight $(5,4,3, 1, 0, 0, 0, 0)$. Now the Littlewood-Richardson Rule guarantees that
$$
[V_{(15, 15,15,15; 8; 5,5)}^A  \otimes V^A_{(5,4,3,1,0,0,0)}: V^{A}_{({\bf 16}, 15,15,15; {\bf 11}; {\bf 10, 9})}] \geq 1.
$$
\end{example}

\begin{lemma} \label{lem-2}
For all $i > 0$ and $m \geq 0$, we have
\begin{equation} \label{eq-vanish}
[(V_{(\theta_1;\dots;\theta_l)}^A \boxtimes V_{\theta_{\mathcal{O}}}^{K'}) \otimes S^m(\mathfrak{n} \cap \mathfrak{k}): H^i(\mathfrak{n} \cap \mathfrak{k},V^K_{\mu})] = 0.
\end{equation}
\end{lemma}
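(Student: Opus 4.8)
The plan is to show that for every $i>0$, the only weights $w(\mu+\rho)-\rho$ with $w\in W'$ of length $i$ that could contribute to the left-hand side of \eqref{eq-vanish} fail to appear as $M\cap K$-types of $(V_{(\theta_1;\dots;\theta_l)}^A \boxtimes V_{\theta_{\mathcal{O}}}^{K'}) \otimes S^m(\mathfrak{n}\cap\mathfrak{k})$. The starting point is the decomposition \eqref{eq-mfu}, which shows $\mathfrak n\cap\mathfrak k$ is spanned (as $M\cap K$-module) by a ``vector'' piece $\mathbb C^{\sum a_i}\boxtimes\mathbb C^x$ and a ``scalar'' piece living entirely on the $GL$-factor. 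Consequently, as recorded in Remark \ref{rmk-gamma}, every $M\cap K$-type $V^A_{\gamma_1}\boxtimes V^{K'}_{\gamma_2}$ occurring in $(V_{(\theta_1;\dots;\theta_l)}^A \boxtimes V_{\theta_{\mathcal{O}}}^{K'})\otimes S^m(\mathfrak n\cap\mathfrak k)$ satisfies the size inequality $|\gamma_1|\geq|\theta_1|+\dots+|\theta_l|$ and, more importantly, the ``conservation law'' $|\gamma_1|-|\gamma_2|=|\theta_1|+\dots+|\theta_l|-|\theta_{\mathcal O}|$ (each vector box on the $GL$-side is matched by one on the $G'$-side, while each scalar box adds only to the $GL$-side). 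I would first isolate this numerical invariant precisely.

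Next I would compute the effect of $w\in W'$ on $|\cdot|$. Writing $\mu=(\mu_1,\dots,\mu_l;\mu_{\mathcal O})$ and recalling that $W'$ consists of minimal-length coset representatives for $W_M\backslash W$, the $M\cap K$-type $V^{M\cap K}_{w(\mu+\rho)-\rho}$ has highest weight whose $GL$-part and $G'$-part are obtained by applying $w$ to $\mu+\rho$ and re-sorting within each block; the key point is that each simple reflection appearing in a reduced word for a nontrivial $w\in W'$ strictly \emph{decreases} the quantity $|(\text{GL-part})|-|(\text{G'-part})|$ (for type C, D/B one tracks exactly which simple roots lie outside $\Delta_M^+$ and checks the sign). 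Hence for $i=l(w)>0$ the weight $w(\mu+\rho)-\rho=(\gamma_1;\gamma_2)$ satisfies $|\gamma_1|-|\gamma_2|<|\mu_1|+\dots+|\mu_l|-|\mu_{\mathcal O}|=|\theta_1|+\dots+|\theta_l|-|\theta_{\mathcal O}|$, where the last equality holds because, by the construction in Theorem \ref{thm-slkt}, passing from $\theta$'s to $\mu$'s adds the same total $|\nu_{i_1}|+\dots+|\nu_{i_j}|$ to the $GL$-side and to the $G'$-side. This contradicts the conservation law from the previous paragraph, forcing multiplicity $0$.

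The main obstacle I anticipate is making the ``strict decrease under $W'$'' statement airtight and uniform across types $B$, $C$, $D$: one must verify that the simple roots of $\mathfrak g$ lying outside $\mathfrak m$ (there is essentially one, connecting the $GL$-block to the $G'$-block) act on the relevant weight by moving mass from the $G'$-coordinates to the $GL$-coordinates in the correct direction, and that no cancellation occurs for longer $w$ — this is where the regularity hypothesis ``$2\lambda$ dominant integral regular'' and the explicit shape of $\mu+\rho$ (adjacent coordinates of $2\lambda$ differ by at most one, from Corollary \ref{Cor-usmall}) are used to rule out degenerate reorderings. A secondary technical point is the type-$C$ scalar piece $V^A_{(2,0,\dots,0)}$ in \eqref{eq-mfu}: it contributes $2$ to $|\gamma_1|$ per box rather than $1$, but since it contributes $0$ to $|\gamma_2|$ it only strengthens the inequality $|\gamma_1|\geq|\gamma_2|+(\text{const})$, so the argument is unaffected. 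Once the sign computation is done, the lemma follows by comparing the two inequalities and invoking \eqref{eq-kostant}.
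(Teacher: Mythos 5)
Your overall strategy matches the paper's: use the single numerical invariant
$|\gamma_1|-|\gamma_2|$ (GL-block weight sum minus $G'$-block weight sum) and show that
it is bounded below by $|\theta_A|-|\theta_{\mathcal O}|$ for every $M\cap K$-type of
$(V^A_{\theta_A}\boxtimes V^{K'}_{\theta_{\mathcal O}})\otimes S^m(\fn\cap\frk)$, but strictly smaller
than $|\theta_A|-|\theta_{\mathcal O}|$ for every Kostant constituent $V^{M\cap K}_{w(\mu+\rho)-\rho}$
with $\ell(w)>0$, using $|\omega_1|=|\omega_2|$. That is exactly the published proof.
One small slip: the ``conservation law'' you first state with an equals sign is only an
inequality $|\gamma_1|-|\gamma_2|\ge|\theta_A|-|\theta_{\mathcal O}|$, and not only because of
the type $C$ scalar piece $V^A_{(2,0,\dots)}$ but also because the type $B,D$ piece
$V^A_{(1,1,0,\dots)}\boxtimes V^{K'}_{(0,\dots)}$ contributes to the GL side only, and because
tensoring on the $G'$ side against $V^{K'}_{(1,0,\dots)}$ can \emph{decrease} $|\gamma_2|$
(contraction with the invariant form). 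You do correct yourself later, so this is cosmetic.

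There is, however, a genuine gap in how you treat the $W'$ side. You frame the key step as
``each simple reflection in a reduced word for $w\in W'$ strictly decreases
$|\text{GL-part}|-|\text{$G'$-part}|$'' and plan to verify it root by root. For $W$ of type
$B$, $C$, $D$ this is not the right reduction: $W$ contains sign changes, and a sign change
applied to a coordinate slated for the GL block changes the sign rather than merely permuting
mass between the two blocks, so the ``decrease per simple reflection'' picture does not
directly apply. The published argument instead looks at the \emph{net} effect of $w$: it
observes that if $w$ assigns a minus sign to any coordinate landing in $\beta_1$, then the
resulting GL highest weight $\beta_1-\rho_{GL}$ has a negative entry and therefore cannot
match any $M\cap K$-type of the tensor product, whose GL part is $\theta_A$ plus a polynomial
weight. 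This forces $w$ to be a pure permutation, and then the conclusion is immediate: the
identity is the unique permutation putting the $q$ largest entries of the strictly decreasing
vector $\mu+\rho$ into the GL block, so any $w\ne 1$ strictly lowers the GL-block sum and
strictly raises the $G'$-block sum. You would need to add the positivity/elimination-of-signs
step to make your sketch correct; once that is in place, your ``strict decrease per reflection''
claim is both unnecessary and harder to justify than the one-shot comparison. Also, your appeal
to Corollary~\ref{Cor-usmall} (adjacent coordinates of $2\lambda$ differ by at most one) is not
needed anywhere in this lemma; the regularity of $\mu+\rho$ is all that is used.
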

\begin{proof}
Let $\theta_A := (\theta_1;\dots;\theta_l)$. By Remark \ref{rmk-gamma}, the $M \cap K$-types appearing
on the left hand side of \eqref{eq-vanish} must be of the form
$$V_{\theta_A + \gamma_1}^A \boxtimes V_{\theta_{\mathcal{O}}+\gamma_2}^{K'}, \quad \quad |\gamma_1| \geq |\gamma_2|,$$
where $\gamma_1$ and $\gamma_2$ consist solely of non-negative integers.
We \emph{claim} that for all $i > 0$, if $H^i(\mathfrak{n} \cap \mathfrak{k},V^K_{\mu})$ consist of $M \cap K$-types of the form
$$V_{\theta_A + \delta_1}^A \boxtimes V_{\theta_{\mathcal{O}}+\delta_2}^{K'}$$
where $\delta_1, \delta_2$ consists only of non-negative integers, then $|\delta_1| < |\delta_2|$.

Indeed, recall from the construction of $\mu$ in Theorem \ref{thm-slkt} that
$$\mu = (\overbrace{\quad \theta_A + \omega_1 \quad}^{q := \sum_i a_i}; \overbrace{\quad \theta_{\mathcal{O}} + \omega_2 \quad}^{r := {\rm rank}(G')} ), \quad \quad |\omega_1| = |\omega_2|.$$
If $w \in W$ is so that $w(\mu + \rho) = (\beta_1; \beta_2)$ is regular on $\Delta_L^+$, $\beta_1$, $\beta_2$ are obtained by the following:
\begin{enumerate}
\item[(a)] Take any sub-collection of $q$ entries inside $\mu + \rho$, and assign either $+$ or $-$ to each coordinate;
\item[(b)] $\beta_1$ is obtained by rearranging the $q$ coordinates chosen in (a) in descending order;
\item[(c)] For the remaining $r$ entries of $\mu + \rho$, rearrange in descending order and get $\beta_2$.
\end{enumerate}
It is obvious that if $w(\mu + \rho) - \rho = (\beta_1;\beta_2) - \rho$ contributes to any multiplicities in \eqref{eq-vanish},
the entries of $\beta_1$ must be all positive, i.e., we always assign $+$ in Step (a) above. So we focus on
$w \in W$ consisting of transpositions only, which implies that $|w(\mu + \rho) - \rho| = |\mu|$.

Therefore, if $w$ is not the identity, then
the sum of the first $q$ coordinates of $w(\mu+\rho)-\rho$ must be strictly less than that of $\theta_A + \omega_1$,
and the sum of the last $r$ coordinates of $w(\mu+\rho)-\rho$ must be strictly greater than that of $\theta_{\mathcal{O}} + \omega_2$. This proves our claim, and the lemma follows immediately.
\end{proof}

The proof of Theorem \ref{thm-slkt} ends with:
\begin{proposition}
Equation \eqref{eq-par} holds for $V^K_{\mu}$, i.e., $\{\mu - \rho\} = 2\lambda - \rho.$
\end{proposition}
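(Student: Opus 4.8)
The plan is to reduce the statement to an elementary comparison of multisets of coordinates, and then to carry that comparison out chain by chain. Since $2\lambda$ is dominant, integral and regular, the weight $2\lambda-\rho$ is dominant, hence equals its own dominant representative; so it suffices to show that $\mu-\rho$ and $2\lambda-\rho$ lie in the same $W$-orbit. In the standard coordinates of the classical group $G$ this means exactly that the multiset $\{\,|(\mu-\rho)_j|\,\}_{j=1}^{N}$ equals the multiset $\{\,(2\lambda-\rho)_j\,\}_{j=1}^{N}$ of (non-negative) coordinates of $2\lambda-\rho$. In type $D$ the dominant representative is only defined modulo an even number of sign changes, but this causes no trouble: a Type $D$ chain $\mathcal X_{\mathcal O}$ always contributes the coordinate $0$ to $2\lambda$, so $2\lambda-\rho$ has a vanishing coordinate which absorbs sign changes freely. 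Thus in all three types it is enough to compare multisets of absolute values.

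To set this up, write $M=GL(q)\times G'$ with $q=\sum_i a_i$, $r=\mathrm{rank}(G')$, $N=q+r$, split $\mu=(\mu_1;\dots;\mu_l;\mu_{\mathcal O})$ along the $GL$- and $G'$-coordinates, and recall that $2\lambda$ is the sorted union of the chains $\mathcal A_{T_i,t_i}$ ($1\le i\le l$) and $\mathcal X_{\mathcal O}$. A root count shows that, in these coordinates, $\rho$ restricts to $\rho_{G'}$ on the last $r$ coordinates and to $\rho_{GL(q)}+c\,\mathbf{1}_q$ on the first $q$ coordinates, where $\mathbf{1}_q=(1,\dots,1)$ and $c$ is half the number of roots of $\mathfrak n$ through a fixed coordinate (for instance $c=r+q/2$ in type $B$, with the analogous formulas in types $C$, $D$). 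Hence
$$
\mu-\rho \;=\; \Bigl(\,(\mu_1;\dots;\mu_l)-\rho_{GL(q)}-c\,\mathbf{1}_q\ ;\ \mu_{\mathcal O}-\rho_{G'}\,\Bigr),
$$
so the $GL$-block can be analysed against $\bigcup_i\mathcal A_{T_i,t_i}$ (after the shift by $c$) and the $G'$-block against $2\lambda_{\mathcal O}$.

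The two single-factor statements that feed the computation are: on each $GL(a_i)$-block, $\theta_i=\bigl((T_i+t_i)/2\bigr)^{a_i}$ and a one-line check gives $\{\theta_i-\rho_{a_i}\}+\rho_{a_i}=\{T_i,T_i-2,\dots,t_i\}$, the coordinates of $\mathcal A_{T_i,t_i}$; and on the $G'$-block, $\{\theta_{\mathcal O}-\rho_{G'}\}+\rho_{G'}=2\lambda_{\mathcal O}$, which is the assertion that $\pi_{\mathcal O}\in\widehat{G'}^{\,d}$ has spin-lowest $K$-type $V^{K'}_{\theta_{\mathcal O}}$ together with \eqref{eq-par} (\cite[\S5.4--5.6]{BP1}). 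When no $\mathcal A_i$ is linked to $\mathcal X_{\mathcal O}$ the proposition is then immediate. In general one argues chain by chain: an unlinked chain $\mathcal A_i$ keeps $\mu_i=\theta_i$ and contributes its $a_i$ coordinates to $\mu-\rho$ exactly in the form in which $\mathcal A_{T_i,t_i}$ contributes (after the $\rho$-shift) to $2\lambda-\rho$; a chain $\mathcal A_i$ linked to $\mathcal X_{\mathcal O}$ with interlacing parameter $p$ has a copy of $\nu_i=(p,p-1,\dots)$ transferred into $\mu_{\mathcal O}$, and one must verify that after the $\rho$-shift the $p$ (resp.\ $q$) coordinates of $\theta_i$ equal to $(A_1+A_q)/2$ enlarged by $\nu_i$, together with the transferred $\nu_i$-coordinates sitting inside the $G'$-block, reassemble precisely the coordinates that $\mathcal A_{T_i,t_i}$ occupies inside the sorted union $2\lambda$. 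Summing these contributions over all chains and over $\theta_{\mathcal O}$ reproduces the multiset of coordinates of $2\lambda-\rho$, which finishes the proof.

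The only real obstacle is this last verification. It is elementary but must be done carefully, using (a) the ordering convention $T_i+t_i\ge T_j+t_j$ for $i\le j$, (b) the exact position of the $\nu_i$-block inside $\mu_{\mathcal O}=(\theta_{\mathcal O,+};\nu_{i_j};\dots;\nu_{i_1};0,\dots,0)$ relative to $\theta_{\mathcal O,+}$ and to the other $\nu$-blocks, and (c) the inequalities ($M>N>m$, or $N>M>n$, or the exceptional pair $\{\mathcal C_{[2n]},\mathcal A_{(1,1)}\}$) defining $p$ in the notion of linked chains; the two cases of step (ii) of Theorem \ref{thm-slkt} are handled in parallel by the same principle, and the exceptional $\{1\}$-over-$\mathcal C_{[2n]}$ case is checked directly, as in the small-rank examples above.
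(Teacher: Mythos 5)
Your reduction to a multiset comparison of coordinates is sound (in type $D$ the presence of the coordinate $0$ in $\mathcal X_{\mathcal O}$ does, as you say, absorb the sign-parity constraint), and your identification of how $\rho$ splits over $GL(q)\times G'$ is correct, as is the handling of the $GL$-block via \cite{DW} and of unlinked chains. But the heart of the statement is precisely the step you flag as ``the only real obstacle'' and then do not carry out: the verification that for linked chains the $\rho$-shifted, re-sorted coordinates of $\mu$ reproduce $2\lambda-\rho$. You propose to do this chain by chain, but forming $\{\mu-\rho\}$ requires sorting \emph{all} the $G'$-block coordinates simultaneously (the top part $\theta_{\mathcal O,+}$, the transferred blocks $\nu_{i_j},\dots,\nu_{i_1}$, and the trailing zeros, all shifted by $\rho_{G'}$), so one cannot cleanly attribute a sorted coordinate of $\{\mu-\rho\}$ back to a single chain $\mathcal A_i$; the interleaving is global, not local.

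The paper avoids this by fixing the block decomposition $GL(q)\times G'$ once and for all, reducing the $G'$-block comparison to the differences $\Lambda=(2\lambda-\rho)|_{G'}-(2\lambda_{\mathcal O}-\rho_r)$ and $\Theta=\{\mu-\rho\}|_{G'}-\{\theta_{\mathcal O}-\rho_r\}$, and then identifying the crucial combinatorial fact that makes $\Lambda=\Theta$: for a strictly decreasing partition $\mathbf p$ contained in the staircase $\ell=\rho_m$ one has
\[
\{\ell-\mathbf p\}=\ell-\mathbf p^{t},
\]
where $\mathbf p^{t}$ is the conjugate partition. This transpose identity, proved by reading off row-lengths versus column-lengths of the skew diagram $\ell/\mathbf p$, is what makes the interlacing bookkeeping close up, and it is not implied by, nor obviously reachable from, the chain-by-chain bookkeeping you outline. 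Without stating and proving such an identity (or an equivalent global mechanism), the proof is incomplete.
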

\begin{proof}
In our construction of $V^K_{\mu} = V^K_{(\mu_1;\dots;\mu_l;\mu_{\mathcal{O}})}$,
the coordinates of Type $A$ chains $\mu_i$ are determined in exactly the same way
as in Algorithm 2.2 of \cite{DW} (this is true also when it is linked to $\mathcal{X}_{\mathcal{O}}$).
Hence the proof in \cite{DW} applies to all $\mu_i$ appearing in $\mu$.

We now focus on studying the coordinates corresponding to $\theta_{\mathcal{O}} \mapsto \mu_{\mathcal{O}}$. For convenience, we reorder
$\mathcal{A}_i$ (if necessary) such that
\begin{equation} \label{eq:interlaced}
\begin{aligned}
\{\quad \quad \quad &\ \mathcal{A}_{1}\quad \quad \quad \}   &&\{\overbrace{\ \ \mathcal{A}_{2}\ \ }^{q_2}\} \quad \cdots\cdots && \{\overbrace{\ \ \ \mathcal{A}_{k}\ \ \ }^{q_k}\}\\
\{&\underbrace{X_{1}, \cdots, X_p}_{p},\ \ \cdots\cdots  &&\cdots \cdots\ \cdots\cdots\ \ \cdots\cdots &&\cdots\cdots ,\ \underbrace{X_{r-Z+1}, \cdots,\ X_r}_{Z}\}_X = \mathcal{X}_{\mathcal{O}}.
\end{aligned}
\end{equation}
In particular, we only study the last $r = \mathrm{rank}(G')$ coordinates of $\{\mu - \rho\}$ and $2\lambda - \rho$ in our calculations thereafter.

\medskip
Let $\lambda_{\mathcal{O}}$ be such that $2\lambda_{\mathcal{O}}$ is equal to the coordinates of $\mathcal{X}_{\mathcal{O}}$.
Since $V^{K'}_{\theta_{\mathcal{O}}}$ is the spin lowest $K$-type of $\pi_{\mathcal{O}}$ by Sections 5.4 -- 5.6 of \cite{BP1}, we have
\begin{equation} \label{eq-xo}
2\lambda_{\mathcal{O}} - \rho_{r} = \{\theta_{\mathcal{O}} - \rho_{r}\},
\end{equation}
where $\rho_k := (h_k, \dots, h_1)$ is half sum of the positive roots in the Dynkin diagram of Lie type $X_k$ $(X = B, C, D)$.

Let 
$$\Lambda := (\text{the last}\ r\ \text{coordinates of}\ 2\lambda - \rho) - (2\lambda_{\mathcal{O}} - \rho_{r}),$$
$$\Theta := (\text{the last}\ r\ \text{coordinates of}\ \{\mu - \rho\}) - \{\theta_{\mathcal{O}} - \rho_{r}\}.$$
If $\Lambda = \Theta$, then we can conclude that $2\lambda - \rho = \{\mu - \rho\}$ by \eqref{eq-xo}, and the proposition follows.
\medskip

We consider $\Lambda$ first. The entries of $2\lambda$ are given by \eqref{eq:interlaced}. Subtracting it by $\rho$, one has 
$$
2\lambda - \rho =\quad \quad \begin{aligned}
\cdots\cdots  &\quad \quad \quad \{\overbrace{\dots\quad , \ast -h_{Z+3},\quad \quad \ast -h_{Z+1}  }^{q_k}\}\\
\{  \cdots, &X_{r-Z-1} -h_{Z+4} ,\quad X_{r-Z} -h_{Z+2},\quad \underbrace{X_{r-Z+1}-h_Z, \cdots,\ X_r-h_1}_{Z}\}_X.
\end{aligned}
$$
Meanwhile,
$$
2\lambda_{\mathcal{O}} - \rho_r = \{X_1 - h_r,  \cdots, X_{r-Z-1} -h_{Z+2}, X_{r-Z} -h_{Z+1},  X_{r-Z+1}-h_Z, \cdots, X_r-h_1 \}_X.
$$
Therefore, we have
\begin{equation} \label{eq-Lambda}
\Lambda = ((\underbrace{\nu_k;\dots;\nu_{1};0,\dots,0}_{r-Z})^t;\underbrace{0,\dots,0}_{Z}),
\end{equation}
where $\nu_i$ are determined in Theorem \ref{thm-slkt} for $1 \leq i \leq k$, and ${\bf p}^t$ is the transpose of the
partition ${\bf p}$ by switching the rows of $p$ into columns. In other words, if ${\bf p} = (\alpha_1, \dots, \alpha_{r-Z})$,
then
$${\bf p}^t := (\beta_1, \dots, \beta_{r-Z}), \quad \quad \text{where}\ \beta_i = \#\{j\ |\ \alpha_j \geq i\} \quad \forall\ i \geq 1.$$
%

On the other hand, note that $0 \leq s :=$ number of coordinates of $\theta_{\mathcal{O},+} \leq Z \leq r$, and
\begin{align*}
&\{\theta_{\mathcal{O}} - \rho_r\} \\
=\ &\{(\theta_{\mathcal{O},+}; 0,\dots,0) - (h_r, \dots, h_1)\} \\
=\ &\{(\underbrace{\theta_{\mathcal{O},+} - (h_r, \dots, h_{r-s+1})}_{s}; \underbrace{(0,\dots, 0)- (h_{r-s}, \dots, h_{Z-s+1})}_{r-Z};
\underbrace{(0, \dots, 0)-(h_{Z-s}, \dots, h_{1})}_{Z-s})\} \\
=\ &(\underbrace{h_{r-s}, \dots, h_{Z-s+1}}_{r-Z}; \underbrace{h_{Z-s}, \dots, h_1; \{\theta_{\mathcal{O},+} - (h_r,\dots, h_{r-s+1})\}}_{Z}).
\end{align*}
Note that the coordinates of $\{\theta_{\mathcal{O},+} - (h_1,\dots, h_s)\}$
are all equal to either $\frac{1}{2}$ or $0$ by direct calculation on $\theta_{\mathcal{O},+}$ given by \eqref{eq-thetao}, or by looking at Equations (20), (22) and (24) of \cite{BDW}.
Hence the expression in the last equality above is dominant.

Meanwhile, recall $\mu = (\theta_{\mathcal{O},+};\nu_k; \dots; \nu_{1};0,\dots,0)$. So
the last $r$ coordinates of $\{\mu-\rho\}$ are equal to:
$$(\{\underbrace{(\nu_k; \dots; \nu_{1}; 0,\dots,0)-(h_{r-s}, \dots, h_{Z-s+1})}_{r-Z}\}; \underbrace{h_{Z-s}, \dots, h_1; \{\theta_{\mathcal{O},+} - (h_r,\dots, h_{r-s+1})\}}_{Z})$$

Writing $\ell := (h_{r-s}, \dots, h_{Z-s+1})$, the difference of the above equations gives
\begin{equation} \label{eq-Theta}
\Theta = (\ell - \{(\nu_k; \dots; \nu_{1}; 0,\dots,0)-\ell\}; 0,\dots,0).
\end{equation}
By comparing \eqref{eq-Lambda} and \eqref{eq-Theta}, the proposition follows if one can show that
$$\ell - \{(\nu_k; \dots; \nu_{1}; 0,\dots,0)-\ell\} = (\nu_k; \dots; \nu_{1}; 0,\dots,0)^t$$
as elements of $\mathbb{N}^m := \mathbb{N}^{r-Z}$, or equivalently
\begin{equation} \label{eq-ell}
\{\ell - (\nu_k; \dots; \nu_{1}; 0,\dots,0)\} = \ell - (\nu_k; \dots; \nu_{1}; 0,\dots,0)^t.
\end{equation}

The coordinates of $\ell$ in \eqref{eq-ell} can be translated by any fixed integer as long as the
coordinates inside the braces on the left remain non-negative. In particular,
we can prove \eqref{eq-ell} holds by replacing $\ell$ with $\rho_m$.
\medskip

For simplicity, we only prove \eqref{eq-ell} in Type $C$ with $\ell = \rho_m = (m,\dots,2,1)$.
We claim that for all partitions ${\bf p} = (p_1,\dots,p_m)$ such
that $p_1 \leq m$ and all {\it positive} entries of ${\bf p}$ are distinct (for example, ${\bf p} = (\nu_k;\dots;\nu_{1};0,\dots,0)$),
\begin{equation} \label{eq-pt}
\{\ell - {\bf p}\} = \ell - {\bf p}^t.
\end{equation}

\begin{example} \label{eg-pt}
Let $\ell = \rho_{10}$ and ${\bf p} = (10,7,5,4,1)$. Then we have
$$\{\ell - {\bf p}\} = \{10-10,9-7,8-5,7-4,6-1,5,4,3,2,1\} = (5,5,4,3,3,3,2,2,1,0)$$
$$\ell - {\bf p}^t = (10,9,8,7,6,5,4,3,2,1) - (5,4,4,4,3,2,2,1,1,1) = (5,5,4,3,3,3,2,2,1,0).$$
Therefore, \eqref{eq-pt} holds.
\end{example}

We now give a proof of \eqref{eq-pt}. By hypothesis, $\ell / {\bf p}$ defines a skew partition, 
whose row and column sizes give the sizes of $\{\ell - {\bf p}\}$ and $\ell - {\bf p}^t$ respectively. 
So we need to show that $\ell / {\bf p}$ have the same row and column sizes.

Mark the $(i,j)$-block of $\ell / {\bf p}$ by $(m+2) - (i+j)$, so that the leftmost entry of each row of
$\ell / {\bf p}$ gives the size of the row, and the topmost entry of each column of $\ell / {\bf p}$
gives the size of the column. For instance, in the setting of Example \ref{eg-pt}, we have
\begin{center}
\begin{tabular}{|c|c|c|c|c|c|cccc}
\hline
$\bullet$ & $\bullet$ &$\bullet$  & $\bullet$ & $\bullet$ & $\bullet$ & \multicolumn{1}{c|}{$\bullet$} & \multicolumn{1}{c|}{$\bullet$} & \multicolumn{1}{c|}{$\bullet$} & \multicolumn{1}{c|}{$\bullet$}\tabularnewline
\hline
 $\bullet$&$\bullet$  & $\bullet$ & $\bullet$ &  $\bullet$& $\bullet$ & \multicolumn{1}{c|}{$\bullet$} & \multicolumn{1}{c|}{$2$} & \multicolumn{1}{c|}{$1$} & \tabularnewline
\cline{1-9}
 $\bullet$&  $\bullet$& $\bullet$ & $\bullet$ & $\bullet$ & $3$ & \multicolumn{1}{c|}{$2$} & \multicolumn{1}{c|}{$1$} &  & \tabularnewline
\cline{1-8}
 $\bullet$& $\bullet$ & $\bullet$ & $\bullet$ & $3$ & $2$ & \multicolumn{1}{c|}{$1$} &  &  & \tabularnewline
\cline{1-7}
 $\bullet$ & $5$ & $4$ & $3$ & $2$ & $1$ &  &  &  & \tabularnewline
\cline{1-6}
$5$ & $4$ & $3$ & $2$ & $1$ & \multicolumn{1}{c}{} &  &  &  & \tabularnewline
\cline{1-5}
$4$ & $3$ & $2$ & $1$ & \multicolumn{1}{c}{} & \multicolumn{1}{c}{} &  &  &  & \tabularnewline
\cline{1-4}
$3$ & $2$ & $1$ & \multicolumn{1}{c}{} & \multicolumn{1}{c}{} & \multicolumn{1}{c}{} &  &  &  & \tabularnewline
\cline{1-3}
$2$ & $1$ & \multicolumn{1}{c}{} & \multicolumn{1}{c}{} & \multicolumn{1}{c}{} & \multicolumn{1}{c}{} &  &  &  & \tabularnewline
\cline{1-2}
$1$ & \multicolumn{1}{c}{} & \multicolumn{1}{c}{} & \multicolumn{1}{c}{} & \multicolumn{1}{c}{} & \multicolumn{1}{c}{} &  &  &  & \tabularnewline
\cline{1-1}
\end{tabular}
\end{center}
so that its \textit{nonzero} row and column sizes are $\{\ell - {\bf p}\} = \{2,3,3,5,5,4,3,2,1\}$ (counting from top to bottom) and $\ell - {\bf p}^t = (5,5,4,3,3,3,2,2,1)$
(counting from left to right) respectively.

\medskip
We identify the entries of $\{\ell - {\bf p}\}$ with that of $\ell - {\bf p}^t$ as follows:

\medskip
\noindent {\bf (i)}\ If the leftmost block of a row of $\ell / {\bf p}$ is also the topmost block of a column of $\ell / {\bf p}$,
then we have a natural identification between the entries of $\{\ell - {\bf p}\}$ and $\ell - {\bf p}^t$ corresponding to this block.
  
For instance, the blocks satisfying this property in Example \ref{eg-pt} are circled below:
\begin{center}
\begin{tabular}{|c|c|c|c|c|c|cccc}
\hline
$\bullet$ & $\bullet$ &$\bullet$  & $\bullet$ & $\bullet$ & $\bullet$ & \multicolumn{1}{c|}{$\bullet$} & \multicolumn{1}{c|}{$\bullet$} & \multicolumn{1}{c|}{$\bullet$} & \multicolumn{1}{c|}{$\bullet$}\tabularnewline
\hline
 $\bullet$&$\bullet$  & $\bullet$ & $\bullet$ &  $\bullet$& $\bullet$ & \multicolumn{1}{c|}{$\bullet$} & \multicolumn{1}{c|}{\textcircled{$2$}} & \multicolumn{1}{c|}{$1$} & \tabularnewline
\cline{1-9}
 $\bullet$&  $\bullet$& $\bullet$ & $\bullet$ & $\bullet$ & \textcircled{$3$} & \multicolumn{1}{c|}{$2$} & \multicolumn{1}{c|}{$1$} &  & \tabularnewline
\cline{1-8}
 $\bullet$& $\bullet$ & $\bullet$ & $\bullet$ & \textcircled{$3$} & $2$ & \multicolumn{1}{c|}{$1$} &  &  & \tabularnewline
\cline{1-7}
 $\bullet$ & \textcircled{$5$} & $4$ & $3$ & $2$ & $1$ &  &  &  & \tabularnewline
\cline{1-6}
\textcircled{$5$} & $4$ & $3$ & $2$ & $1$ & \multicolumn{1}{c}{} &  &  &  & \tabularnewline
\cline{1-5}
$4$ & $3$ & $2$ & $1$ & \multicolumn{1}{c}{} & \multicolumn{1}{c}{} &  &  &  & \tabularnewline
\cline{1-4}
$3$ & $2$ & $1$ & \multicolumn{1}{c}{} & \multicolumn{1}{c}{} & \multicolumn{1}{c}{} &  &  &  & \tabularnewline
\cline{1-3}
$2$ & $1$ & \multicolumn{1}{c}{} & \multicolumn{1}{c}{} & \multicolumn{1}{c}{} & \multicolumn{1}{c}{} &  &  &  & \tabularnewline
\cline{1-2}
$1$ & \multicolumn{1}{c}{} & \multicolumn{1}{c}{} & \multicolumn{1}{c}{} & \multicolumn{1}{c}{} & \multicolumn{1}{c}{} &  &  &  & \tabularnewline
\cline{1-1}
\end{tabular},
\end{center}
with $\{\ell - {\bf p}\} = \{$\textcircled{2},\textcircled{3},\textcircled{3},\textcircled{5},\textcircled{5},4,3,2,1$\}$, $\ell - {\bf p}^t = ($\textcircled{5},\textcircled{5},4,3,\textcircled{3},\textcircled{3},2,\textcircled{2},1$)$.
\medskip

\noindent {\bf (ii)} Consider the unidentified entries of $\{\ell - {\bf p}\}$ in {\bf (i)}. They correspond to the entries
in the leftmost block of a row in $\ell / {\bf p}$ but not the topmost block of any column. 
Since ${\bf p}$ is a strictly decreasing partition, these blocks must occur at the first column of $\ell / {\bf p}$. More precisely,
if ${\bf p} = (p_1 > \dots > p_t > p_{t+1} = 0 \dots p_m = 0)$, then the entry of $(t+1,1)$-block of $\ell / {\bf p}$
is $m-t$, and the blocks below it cannot be the topmost block of a column. These blocks take the entries:
\begin{equation} \label{eq-a}
\{m-t-1, \dots, 2,1\}
\end{equation}
(in our example, $m = 10$, $t = 5$ and hence $m-t-1 = 4$).

\medskip
\noindent {\bf (iii)} On the other hand, we study the unidentified entries of $\ell - {\bf p}^t$ in {\bf (i)}. They are the entries
at the topmost block of a column but not the leftmost block of any row. 
Namely,
suppose the entries of the $i^{th}$-row of $\ell - {\bf p}$ are $v_i$, $v_i - 1$, $\dots$, $1$,
then the blocks corresponding to $v_i -1$, $v_i -2$, $\dots$, $v_{i-1}$ are the topmost blocks of some columns that
are not the leftmost blocks of row $i$. 

For instance, by looking the fifth row of the skew tableau in {\bf (i)}, the entries on this row
are \textcircled{5}, 4, 3, 2, 1, and the blocks with entries $4$ and $3$ contribute to the 
unidentified entries in $\ell - {\bf p}^t$.

Collecting all such entries on each row of $\ell / {\bf p}$, we have
\begin{equation} \label{eq-b}
\bigcup_{i = 1}^{t+1} \{v_i - 1, v_i -2, \dots, v_{i-1}\} = \{v_{t+1}-1, v_{t+1} -2, \dots, 2, 1\}.
\end{equation}
Note that the entry $v_{t+1}$ appears at the $(t+1,1)$-block, so $v_{t+1} = m-t$ as in {\bf (ii)}.

\medskip
Consequently, \eqref{eq-a} and \eqref{eq-b} are equal, and we have established 
an identification between the entries of $\{\ell - {\bf p}\}$ and $\ell - {\bf p}^t$.
Therefore \eqref{eq-pt} holds, and the result follows.
\end{proof}

\section{On a conjecture of Huang} \label{sec-Hconj}

Let us investigate Conjecture 13.3 of \cite{H} raised by Huang in 2015.

\begin{conjecture}\emph{(\cite{H})}\label{conj-Huang}
A unitary representation either has nonzero Dirac cohomology or is induced from a unitary representation with nonzero Dirac cohomology by parabolic induction.
\end{conjecture}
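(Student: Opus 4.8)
We will \emph{disprove} Conjecture~\ref{conj-Huang}, so the plan is to produce a single counterexample: an irreducible unitary $(\fg,K)$-module $\pi_0$ of some complex classical group $G$ (necessarily outside type $A$) with $H_D(\pi_0)=0$ that does \emph{not} arise as a full parabolic induction $\Ind_L^G(\sigma\otimes\nu)$ from a unitary $\sigma$ with non-zero Dirac cohomology. The essential leverage is that Theorem~\ref{thm-unitarydual} describes $\widehat G^d$ \emph{completely}: a unitary $\pi$ lies in $\widehat G^d$ exactly when it has the displayed induced form with inducing datum one of the five unipotent families $\pi_{\CO}$. So for a candidate $\pi_0$ it suffices to verify three things: \emph{(i)} $\pi_0$ is unitary; \emph{(ii)} $\pi_0$ is not of the form listed in Theorem~\ref{thm-unitarydual}, whence $H_D(\pi_0)=0$; and \emph{(iii)} $\pi_0$ is not $\Ind_L^G(\sigma\otimes\nu)$ for any proper Levi $L$, any real unramified character $\nu$ of $L$, and any unitary $\sigma$ of $L$ with $H_D(\sigma)\neq 0$ --- which, by transitivity of induction and the recursive structure recorded after Definition~\ref{def-sc}, is the same as: $\pi_0$ is not built by (possibly twisted) parabolic induction from a scattered representation of a proper Levi.

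First I would look for $\pi_0$ among the unipotent representations attached to nilpotent orbits $\CO\subset\fg$ that are \emph{rigid} --- i.e.\ $\CO$ is not Richardson-induced from any nilpotent orbit of any proper Levi $\fl\subsetneq\fg$ --- and that are absent from the (short) list of orbits occurring in Theorem~\ref{thm-unitarydual}; I expect the smallest such $\CO$ to occur already in a low-rank $Sp(2n,\bC)$ or $SO(m,\bC)$. For such a $\pi_0=\pi_{\CO}$: step \emph{(i)} follows from Barbasch's determination of the unitary dual of the complex classical groups (for a special rigid $\CO$ it is the unitarity of special unipotent representations; for a non-special rigid $\CO$ one descends to the explicit classification). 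Step \emph{(ii)} is automatic from the completeness of Theorem~\ref{thm-unitarydual}, since the associated variety $\overline\CO$ of $\pi_0$ is not the closure of any orbit carried by a representation on that list (the $\pi_{\CO'}$ and their inductions have associated varieties of a very restricted shape). For step \emph{(iii)} I would use the compatibility of parabolic induction with associated varieties: for a proper Levi $L$ one has $\overline{\mathcal V(\Ind_L^G\sigma)}=\overline{\mathrm{Ind}_{\fl}^{\fg}\mathcal V(\sigma)}$, and twisting by a real character of $L$ leaves the left-hand side unchanged; since $\CO$ is rigid, $\overline\CO$ is not the closure of any induced orbit $\mathrm{Ind}_{\fl}^{\fg}(\mathcal O')$ with $\fl$ proper, so $\pi_0$ is not parabolically induced at all, hence in particular not from a representation with non-zero Dirac cohomology. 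Combining \emph{(i)}--\emph{(iii)} refutes the conjecture.

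The step I expect to be the real obstacle is producing the orbit $\CO$ and verifying \emph{(i)}: one must run through the combinatorial classification of rigid nilpotent orbits in types $B$, $C$, $D$, single out one that does not occur in Theorem~\ref{thm-unitarydual}, and confirm that the representation it supports is genuinely unitary --- the delicate case being a non-special $\CO$, where the Barbasch--Vogan formalism gives no help and one really needs the explicit unitary dual. By contrast the geometric input for \emph{(iii)} --- induction of nilpotent orbits versus parabolic induction of $(\fg,K)$-modules, together with the insensitivity of the associated variety to real deformation --- is standard for complex groups, so I would treat it as routine. I would close by pointing out why type $A$ is exceptional: over $GL(n,\bC)$ every nonzero nilpotent orbit is Richardson, so there are no rigid orbits beyond the zero orbit, and every irreducible unitary module (of real infinitesimal character) is parabolically induced from Speh-type representations, all of which have non-zero Dirac cohomology; the obstruction constructed above cannot arise there, which is exactly the ``special feature of type $A$'' mentioned in the introduction.
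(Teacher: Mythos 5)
Your plan is sound, and it matches the \emph{second}, more general family of counterexamples in the paper: the spherical special unipotent representations $\pi_{\mathcal{O},1}$ attached to nonzero cuspidal special orbits $\mathcal{O}\subset\mathfrak{sp}_{2n}$ (e.g.\ $\mathcal{O}=[4m_1,\dots,4m_k]$ with $m_1>\dots>m_k>0$), for which $H_D=0$ because $h^\vee=2\lambda$ is singular and which cannot be parabolically induced from anything in Theorem~\ref{thm-unitarydual} by an associated-variety argument essentially identical to your step \emph{(iii)}. But note that the paper's \emph{primary} counterexample works by a different and cleaner mechanism that you did not consider: it takes the spherical metaplectic representation $\pi_{\rm even}=J(\lambda,\lambda)$ of $Sp(6,\mathbb{C})$ with $2\lambda=(5,3,1)=\rho$, already known from [BP1, \S5.5] to have $H_D(\pi_{\rm even})=0$, and then observes that because $2\lambda$ is dominant, integral and \emph{regular}, Theorem 2.4 of [BP1] would force $H_D\neq 0$ for any full parabolic induction from a unitary $\pi_L$ with $H_D(\pi_L)\neq 0$ — an immediate contradiction, with no orbit geometry at all. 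The regularity of the infinitesimal character is thus a shortcut you missed: your step \emph{(ii)} implicitly assumes $2\lambda$ regular (so that Theorem~\ref{thm-unitarydual} applies), without noticing that this very regularity already closes the argument. Two further remarks. First, your proposal remains a blueprint: you never exhibit a concrete $\mathcal{O}$, and you yourself flag unitarity in the non-special case as the expected obstacle; the paper's examples ($\pi_{\rm even}$ and the special unipotent $\pi_{\mathcal{O},1}$) are concrete and come with unitarity for free from [B] and [BV]. Second, the paper's orbits $[4m_1,\dots,4m_k]$ are cuspidal but not rigid in the usual partition sense ($p_i-p_{i+1}=4(m_i-m_{i+1})\geq 4$ violates the $\leq 1$ criterion), so your insistence on rigid orbits is more restrictive than necessary, although it does buy you the stronger statement that $\pi_0$ is not parabolically induced at all.
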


\begin{example}
Let $G$ be $Sp(6, \bC)$. Fix a positive root system $\Delta_G^+$ so that it has simple roots  $\{e_1-e_2, e_2-e_3, 2e_3\}$. Consider the  spherical irreducible unitary representation
$
J(\lambda, \lambda)
$
with $\lambda=(\frac{5}{2}, \frac{3}{2}, \frac{1}{2})$. This is the metaplectic representation $\pi_{\rm even}$ described in Section 5.5 of \cite{BP1}. As computed there, $H_D(\pi_{\rm even})=0$.

We \emph{claim} that $\pi_{\rm even}$ cannot be parabolically induced from any unitary representation with non-zero Dirac cohomology. Indeed, if there exists such a representation, say $\pi_L$, then
$$
\pi_{\rm even}={\rm Ind}_{L}^G(\pi_L).
$$
Since the infinitesimal character $2\lambda=(5,3,1)$ of $\pi_{\rm even}$ is dominant, integral and regular for $\Delta_G^+$, one would conclude from Theorem 2.4 of \cite{BP1} that $H_D(\pi_{\rm even})\neq 0$, contradiction.  Thus  the claim holds, and $\pi_{\rm even}$ violates Conjecture \ref{conj-Huang}.

More generally, there are other unipotent representations in $G = Sp(2n,\mathbb{C})$ violating the conjecture. Consider the spherical special unipotent representation $\pi_{\mathcal{O},1}$ corresponding to a nonzero, cuspidal special nilpotent orbit $\mathcal{O} \subset \mathfrak{g}$. For example, one can take $\mathcal{O} = [4m_1, 4m_2, \dots, 4m_k]$ with integers $m_1 > m_2 > \dots > m_k > 0$. Then $H_D(\pi_{\mathcal{O},1}) = 0$, since $h^{\vee} = 2\lambda$ in Equation \eqref{eq-par} is singular (here $h^{\vee}$ is the semisimple element of a Jacobson-Morozov triple of the Lusztig-Spaltenstein dual $\mathcal{O}^{\vee} \subset \mathfrak{g}^{\vee}$). On the other hand, $\pi_{\mathcal{O},1}$ cannot be parabolically induced from any representations given in Theorem \ref{thm-unitarydual} tensored with a unitary character of Type $A$, or else its associated variety ($\overline{\mathcal{O}}$ in this case) would be a non-cuspidal nilpotent orbit.\hfill\qed
\end{example}

\medskip
\centerline{\scshape Acknowledgements}
The first named author would like to thank his thesis adviser Prof.~Huang sincerely for explaining Conjecture \ref{conj-Huang} to him during a conference held at the Chern Institute of Mathematics in June 2014. In particular, ``parabolic induction" in Conjecture \ref{conj-Huang} means full parabolic induction. That is, one does not need to pass to composition factors.

We thank the two referees sincerely for their very careful reading and very nice suggestions.

\medskip
\centerline{\scshape Funding}
Dong was supported by the National Natural Science Foundation of China (grant 11571097, 2016-2019). Wong is supported by the National Natural Science Foundation of China (grant 11901491) and the Presidential Fund of CUHK(SZ).


\begin{thebibliography}{BDW}

\bibitem[B]{B}
D.~Barbasch,
{\em The unitary dual for complex classical Lie groups},
Invent. Math. \textbf{96} (1989), 103--176.

\bibitem[BDW]{BDW} D.~Barbasch, C.-P.~Dong, K.D.~Wong,
{\em Dirac series for complex classical Lie groups},
preprint, arXiv:2010.01584, 2020.

\bibitem[BP1]{BP1}
D.~Barbasch, P. Pand\v zi\'c,
{\em Dirac cohomology and unipotent representations of complex groups},
Noncommutative geometry and global analysis,
Contemp. Math. \textbf{546}, Amer. Math. Soc., Providence, RI, 2011, pp.~1--22.


\bibitem[BP2]{BP2} D.~Barbasch, P.~Pand\v zi\'c,  \emph{Twisted Dirac index and applications
to characters}, Affine, vertex and $W$-algebras, pp.~23--36, Springer INdAM Series \textbf{37}, 2019.

\bibitem[BV]{BV}
D.~Barbasch, D.~Vogan,
{\em Unipotent representations of complex semisimple Lie groups},
Ann. of Math. \textbf{121} (1985), 41--110.

\bibitem[DD]{DD} J.~Ding, C.-P.~Dong,
\emph{Unitary representations with Dirac cohomology: a finiteness result for complex Lie groups}, Forum Math. {\bf 32} (4) (2020), 941--964.

\bibitem[D1]{D1}  C.-P.~Dong,
\emph{On the Dirac cohomology of complex Lie group representations},
Transform. Groups \textbf{18} (1) (2013), 61--79. Erratum: Transform.
Groups \textbf{18} (2) (2013), 595--597.

\bibitem[D2]{D2}   C.-P.~Dong, \emph{Unitary representations with non-zero Dirac cohomology for complex $E_6$}, Forum Math. {\bf 31} (1) (2019), 69--82.



\bibitem[DW]{DW} C.-P.~Dong, K.D.~Wong, \emph{Scattered representations of $SL(n,\mathbb{C})$}, Pacific J. Math., to appear. See also arXiv:1910.02737.

\bibitem[GW]{GW}   R.~Goodman, N.~Wallach, \emph{Symmetry, representations, and invariants}, Graduate Texts in Mathematics, vol.~\textbf{255}. Springer, Dordrecht, 2009.

\bibitem[HTW]{HTW} R.~Howe, E.-C.~Tan, J.~Willenbring,
\emph{Stable branching rules for classical symmetric pairs},
Trans. Amer. Math. Soc. \textbf{357} (4) (2005), 1601--1626.


\bibitem[H]{H} J.-S.~Huang, \emph{Dirac cohomology, elliptic representations and endoscopy},  Representations of reductive groups, pp.~241--276, Progr. Math. vol.~\textbf{312}, Birkh\"auser/Springer, Cham, 2015.

\bibitem[HKP]{HKP} J.-S.~Huang, Y.-F.~Kang, P.~Pand\v zi\'c, \emph{Dirac
cohomology of some Harish-Chandra modules}, Transform. Groups.
\textbf{14} (1) (2009), 163--173.

\bibitem[HP1]{HP1}
J.-S. Huang, P. Pand\v{z}i\'{c},
{\it Dirac cohomology, unitary representations and a proof of a conjecture of
Vogan}, J. Amer. Math. Soc. \textbf{15} (2002), 185--202.

\bibitem[HP2]{HP2}
J.-S. Huang, P. Pand\v{z}i\'{c},
{\it Dirac Operators in Representation Theory},
Mathematics: Theory and Applications, Birkhauser, 2006.


\bibitem[Kn]{Kn} A.~Knapp, \emph{Lie Groups, Beyond an Introduction}, Birkh\"{a}user, 2nd Edition, 2002.

\bibitem[KnV]{KnV}
A.~Knapp, D.~Vogan,
{\em Cohomological induction and unitary representations},
Princeton University Press, Princeton NJ, 1995.


\bibitem [K]{K} B.~Kostant, \emph{Lie algebra cohomology and the generalized Borel-Weil theorem},
Ann. of Math. \textbf{74} (1961), 329--387.



\bibitem[P1]{P1} R.~Parthasarathy, \emph{Dirac operators and the discrete
series}, Ann. of Math. \textbf{96} (1972), 1--30.

\bibitem[P2]{P2} R.~Parthasarathy, \emph{Criteria for the unitarizability of some highest weight modules},
Proc. Indian Acad. Sci. \textbf{89} (1) (1980), 1--24.

\bibitem[PRV]{PRV} K.R.~Parthasarathy, R.~Ranga Rao,
S.~Varadarajan, \emph{Representations of complex semi-simple Lie
groups and Lie algebras}, Ann. of Math. \textbf{85} (1967),
383--429.

\bibitem[S]{S} S.~Salamanca-Riba,
\emph{On the unitary dual of real reductive Lie groups and the $A_q(\lambda)$ modules: the strongly regular case},
Duke Math. J. \textbf{96} (3) (1999), 521--546

\bibitem[SV]{SV} S.~Salamanca-Riba, D.~Vogan, \emph{On the classification of unitary representations of reductive Lie
groups}, Ann. of Math. \textbf{148} (3) (1998), 1067--1133.


\bibitem[V]{V}
D.~Vogan,
{\it Dirac operator and unitary representations},
3 talks at MIT Lie groups seminar, Fall 1997.

\bibitem[VZ]{VZ}
D.~Vogan, G.~Zuckerman,
{\it Unitary Representations with non-zero cohomology},
Compositio Math.
\textbf{53} (1984), 51--90.


\bibitem[Zh]{Zh} D.~P.~Zhelobenko, \emph{Harmonic analysis on complex semisimple Lie
groups}, Mir, Moscow, 1974.


\bibitem[At]{At} Atlas of Lie Groups and Representations, version 1.0, January  2017. See www.liegroups.org for more about the software.
\end{thebibliography}
\end{document}